\newtheorem{theorem}{Theorem}
\newtheorem{proposition}[theorem]{Proposition}
\newtheorem{corollary}[theorem]{Corollary}
\newtheorem{definition}[theorem]{Definition}
\newtheorem{lemma}[theorem]{Lemma}
\begin{document}
\title{Blaschke Decompositions on Weighted Hardy Spaces}
\date{}
\author{Stephen D. Farnham \thanks{Syracuse University Department of Mathematics, Syracuse, NY 13244, USA; email: sdfarnha@syr.edu}}

\maketitle

\abstract{Recently, several papers have considered a nonlinear analogue of Fourier series in signal analysis, referred to as either nonlinear phase unwinding or adaptive Fourier decomposition. In these processes, a signal is represented as the real component of a complex function $F: \partial\mathbb{D}\to \mathbb{C}$, and by performing an iterative method to obtain a sequence of Blaschke decompositions, the signal can be approximated using only a few terms. To better understand the convergence of these methods, the study of Blaschke decompositions on weighted Hardy spaces was studied by Coifman and Steinerberger, under the assumption that the complex valued function $F$ has an analytic extension to $\mathbb{D}_{1+\epsilon}$ for some $\epsilon>0$. This provided bounds on weighted Hardy norms involving a function and its Blaschke decomposition. That work also noted that in many examples, the nonlinear unwinding series of a function converges at an exponential rate to the original signal, which when coupled with an efficient algorithm to perform a Blaschke decomposition, has lead to a new and efficient way to approximate signals. 

In this work, we continue the study of Blaschke decompositions on weighted Hardy Spaces for functions in the larger space $\mathcal{H}^2(\mathbb{D})$ under the assumption that the function has finitely many roots in $\mathbb{D}$. By studying the growth rate of the weights, we improve the bounds provided by Coifman and Steinerberger. This provides us with new insights into Blaschke decompositions on classical function spaces including the Hardy-Sobolev spaces and weighted Bergman spaces. Further, we state a sufficient condition on the weights for our improved bounds to hold for any function in the Hardy space, $\mathcal{H}^2(\mathbb{D})$. These results may help to better explain why the exponential convergence of the unwinding series is seen in many numerical examples.
\medskip

\textbf{Keywords:} Blaschke Decomposition, Series Expansion, Weighted Hardy Space, Dirichlet Space, Unwinding Series

\medskip

\textbf{Mathematics Subject Classification:} Primary 30B; Secondary 30J, 30H}

\section{Introduction}
In many fields, the Hardy Space $\mathcal{H}^p(\mathbb{D})$, where $1\leq p \leq \infty$, has been studied due to its well behaved nature when compared to the larger Lebesgue space, $L^p(\mathbb{D})$. One of the most well known results for $\mathcal{H}^p(\mathbb{D})$ spaces is the decomposition theorem. Simply put, given a function $F\in \mathcal{H}^p(\mathbb{D})$, we can decompose
\begin{equation}
F = B\cdot G
\end{equation}
where $|B(z)| \leq 1$ and $G(z) \neq 0$ for any $z\in \mathbb{D}$. In this factorization, the function $B$ is a Blaschke product with the same zeros as $F$ in $\mathbb{D}$, and $G$ is a function that is also in the space $\mathcal{H}^p(\mathbb{D})$. In the past decade, this theorem was utilized to create an iterative method to express a function, $F\in\mathcal{H}^2(\mathbb{D})$ as a summation involving Blaschke products \cite{Nahon:2010}. These summations are commonly referred to as the ``unwinding series" of $F$.

%
%
This idea inspired research in both signal processing and complex analysis as seen in  \cite{C.S.:2017, C.S.:2018, Qian:2010,Qian:2017} and many other works. In 2017, it was pointed out in \cite{C.S.:2017} that weighted Hardy spaces are a well suited environment to study the effect of Blaschke decompositions, and in particular can be used to show the convergence of the unwinding series. Given arbitrary, monotone increasing weights, several results were provided for functions $F$ that are analytic in $\mathbb{D}_{1+\epsilon}$ for some $\epsilon>0$. 
 
 Using the same notion of weighted Hardy spaces, this paper seeks to enhance the main results of \cite{C.S.:2017} in two ways. Firstly, by controlling the growth rate of the weights, we create tighter bounds on the inequalities seen in \cite{C.S.:2017}, for functions in $\mathcal{H}^2(\mathbb{D})$ with a finite number of roots in $\mathbb{D}$. From a practical standpoint, this tells us how Blaschke decompositions affect a majority of functions in many well known spaces, including Hardy-Sobolev spaces and weighted Bergman spaces. Second, we show that for certain bounded weights, we can extend our inequalities to functions that are simply in $\mathcal{H}^2(\mathbb{D})$, regardless of the cardinality of the functions' roots in $\mathbb{D}$.

 \subsection{Acknowledgement}
The results of this work are part of the Ph.D. dissertation of the author.  I would like to thank my advisors, Dr. Loredana Lanzani and Dr. Lixin Shen, for all of their support through the writing of this paper and my thesis. 

\subsection{Main Results}
 We begin by recalling the definition of weighted $\mathcal{H}^2$ spaces, denoted $X_\gamma$ and $Y_\gamma$ found in \cite{C.S.:2017}.

\begin{definition}
Let $\{\gamma_n\}\not \equiv 0$ be a monotone increasing sequence of real numbers that satisfies $\gamma_0=0$. Given a function $F \in \mathcal{H}^2$, we say that $F$ belongs to the space $X_\gamma$ if

\begin{equation}\label{Xgammanorm}
\|F(z)\|_{X_\gamma}^2 = \|\sum_{j\geq 0} a_j z^j \|_{X_\gamma}^2 := \sum_{j\geq 0} \gamma_j |a_j|^2 < \infty.
\end{equation}

Moreover, we define the space $Y_\gamma$ to be the set of functions that satisfy
\begin{equation}\label{Ygammanorm}
\|F(z)\|_{Y_\gamma}^2 = \|\sum_{j\geq 0} a_j z^j \|_{Y_\gamma}^2 := \sum_{j\geq 0} (\gamma_{j+1}-\gamma_j) |a_j|^2 < \infty.
\end{equation}
\end{definition}

These weighted spaces allow us to prove results on many classical spaces in a generalized way. For example, if we choose a sequence $\gamma_n$, bounded above by some constant $C$, such that $\gamma_1 = c,$ where $ c>0$, then the space $X_\gamma$ will be equivalent to the space of $\mathcal{H}^2$ functions satisfying $F(0)=0$, as
\[ \frac{1}{C} \|F\|_{X_\gamma}^2 \leq \|F\|_{\mathcal{H}^2}^2 \leq \frac{1}{c}\|F\|_{X_\gamma}^2. \]

Similarly, if we choose the sequence $\gamma_n=n$, then the space $X_\gamma$ will be equivalent to the smaller Dirichlet space, which is the space of functions in $\mathcal{H}^2$ with derivatives that are also in $\mathcal{H}^2$.

As was pointed out in \cite{C.S.:2017}, the $X_\gamma$ and $Y_\gamma$ spaces allow for a better understanding of the Blaschke decomposition compared to the unweighted $\mathcal{H}^2$ space. To see this, if we are given a function $F$ with a Blaschke decomposition, $F=B\cdot G$, then we have the equality
\[\|F\|_{\mathcal{H}^2}^2 =\|G\|_{\mathcal{H}^2}^2.\]

 Coifman and Steinerberger demonstrated that any function, $F$, that is analytic in $\mathbb{D}_{1+\epsilon}$ and has a root at  $\alpha \in \mathbb{D}$, we have the inequality
\begin{equation}\label{eq:CSinequality}
\|G\|_{X_\gamma}^2 \leq \|F\|_{X_\gamma}^2 - (1-|\alpha|^2) \left \|\frac{G(e^i\cdot)}{1-\overline\alpha e^{i\cdot}}\right \|_{Y_\gamma}^2.
\end{equation}

Moreover, from the proof of this inequality, the authors of \cite{C.S.:2017} were able to provide an enhanced version of \eqref{eq:CSinequality} for the special case $\gamma_n=n$. The result is stated as Corollary 2 in \cite{C.S.:2017}, and tells us that for functions $F\in \mathcal{D}$, the Dirichlet space, with $m$ roots\footnote{The statement of the Corollary does not seem to preclude infinitely many roots as long as $F\in \mathcal{H}^\infty.$}, labeled $\alpha_1,\dots \alpha_m$, we have the identity 
\begin{equation} \label{eq:Cor2}
 \|G\|_{\mathcal{D}}^2=\|F\|_{\mathcal{D}}^2 - \sum_{j=1}^m(1-|\alpha_j|^2)  \left \|\frac{G(\cdot)}{\cdot-\alpha_j}\right\|_{\mathcal{H}^2}^2.
 \end{equation}

 In this paper, we improve upon the two aforementioned results by studying two different types of sequences, $\gamma_n$: ones that increase at an increasing rate, and ones that increase at a decreasing rate. These two types of sequences are of theoretical and practical importance, and by specifying the rate of increase, we are able to create tighter bounds on the  $X_\gamma$ norm of $G$ to obtain inequalities similar to \eqref{eq:Cor2} for more generalized spaces. Further, by imposing additional conditions on the growth rate of the sequence $\gamma_n$, we are able to remove the assumption that the function $F$ has a finite number of roots in $\mathbb{D}$, and obtain a converging infinite series on the right hand side of \eqref{eq:Cor2}.

The first result of this paper, Theorem \ref{thm:MR1} below, investigates the case where the sequence $\{\gamma_n\}$ is increasing at an increasing rate. In other words, by defining
\[ \Gamma_n:= \gamma_{n+1}-\gamma_n,\]
for any $n\geq 0$, we require $\Gamma_n$ to be a monotone increasing sequence. Equivalently stated, we require
\begin{equation}\label{eq:increase}
\forall n\geq 0, \quad \gamma_{n+2}-2\gamma_{n+1}+\gamma_n \geq 0.
\end{equation}
 This condition provides us with several classical function spaces. As was previously mentioned, if $\gamma_n=n$, then the space $X_\gamma$ is equivalent to the Dirichlet space, denoted $ \mathcal{D}$,  and $Y_\gamma$ is $\mathcal{H}^2$. Moreover, if $\gamma_n=n^2$, then $X_\gamma$ is equivalent to the Hardy-Sobolev space, denoted $W^{1,2}$, and $Y_\gamma$ is equivalent to $\mathcal{D}$. 

We point out that functions in these types of weighted Hardy spaces may have a finite or an infinite number of roots in $\mathbb{D}$. In the latter case, a sufficient condition was given by Shapiro and Shields in \cite{Shapiro:1962} for an infinite set of points, $a_n\in \mathbb{D}$, to be the zero set of a function in weighted Hardy spaces. Essentially, given $X_\gamma$, the growth rate of $\gamma_n$ dictates the minimal convergence rate of points to $\partial \mathbb{D}$ for those points to be a zero set of a function in the space. Due to this dependence, we limit our first theorem to functions with a finite number of roots in $\mathbb{D}$ and leave the convergence in the case when $F$ has an infinite number of roots as an open question. This gives us the following result.

\begin{theorem}\label{thm:MR1} Suppose that $\gamma_n$ is a monotone increasing satisfying $\gamma_0=0$ and \eqref{eq:increase}. For functions $F \in X_\gamma$ with a finite number of zeros inside the unit disc labeled $\alpha_1,\alpha_2, \dots, \alpha_m$, we have

\begin{equation} \label{eq:thm1eq1}
\|G(e^{i \cdot})\|_{X_\gamma}^2  \leq \|F(e^{i \cdot})\|_{X_\gamma}^2 - \sum_{j=1}^m \left( (1-|\alpha_j|^2) \left \| \frac{G(e^{i \cdot})}{1-\overline{\alpha_j}e^{i \cdot}}\right \|_{Y_\gamma}^2 \right).
\end{equation}
 \end{theorem}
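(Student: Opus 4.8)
The plan is to reduce the general case to the one-root inequality \eqref{eq:CSinequality} and then iterate, using the monotonicity hypothesis \eqref{eq:increase} to control the error terms that accumulate along the way. First I would order the zeros and peel them off one at a time: writing $B = B_1 B_2 \cdots B_m$ where $B_j$ is the single Blaschke factor associated with $\alpha_j$, set $G_0 = F$ and $G_k = G_{k-1}/B_k$, so that $G_m = G$ and each $G_k$ lies in $\mathcal{H}^2$ with zeros $\alpha_{k+1},\dots,\alpha_m$. The issue is that \eqref{eq:CSinequality} as stated is proved in \cite{C.S.:2017} for functions analytic in $\mathbb{D}_{1+\epsilon}$, whereas here $G_{k-1}$ is only assumed to be in $X_\gamma$; so the first real task is to check that the one-root inequality
\[
\|G_k\|_{X_\gamma}^2 \;\le\; \|G_{k-1}\|_{X_\gamma}^2 \;-\; (1-|\alpha_k|^2)\left\|\frac{G_k(e^{i\cdot})}{1-\overline{\alpha_k}e^{i\cdot}}\right\|_{Y_\gamma}^2
\]
holds for $G_{k-1}\in X_\gamma$ having finitely many zeros. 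This should follow by a density/approximation argument (dilations $G_{k-1}(rz)$ are analytic in a larger disc, and their zeros and $X_\gamma$-, $Y_\gamma$-norms converge appropriately as $r\uparrow 1$), or by re-examining the Fourier-coefficient computation behind \eqref{eq:CSinequality}, which only uses that the relevant series converge — a condition guaranteed once $G_{k-1}\in X_\gamma$ and the quotient is again in $\mathcal{H}^2$ because $\alpha_k$ is a genuine zero.

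Granting the one-step inequality, summing the telescoping chain from $k=1$ to $m$ gives
\[
\|G\|_{X_\gamma}^2 \;\le\; \|F\|_{X_\gamma}^2 \;-\; \sum_{k=1}^m (1-|\alpha_k|^2)\left\|\frac{G_k(e^{i\cdot})}{1-\overline{\alpha_k}e^{i\cdot}}\right\|_{Y_\gamma}^2,
\]
so the remaining work is to show that each intermediate error term dominates the corresponding term in \eqref{eq:thm1eq1}, i.e.
\[
\left\|\frac{G_k(e^{i\cdot})}{1-\overline{\alpha_k}e^{i\cdot}}\right\|_{Y_\gamma}^2 \;\ge\; \left\|\frac{G(e^{i\cdot})}{1-\overline{\alpha_k}e^{i\cdot}}\right\|_{Y_\gamma}^2 .
\]
Since $G = G_k \cdot (B_{k+1}\cdots B_m)$ and each remaining Blaschke factor has modulus $1$ on $\partial\mathbb{D}$, the two quotients have the same modulus on the boundary; thus this inequality is really a statement that multiplication by an inner function does not increase the $Y_\gamma$-seminorm of a fixed boundary function. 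This is exactly where \eqref{eq:increase} enters: when $\Gamma_n = \gamma_{n+1}-\gamma_n$ is monotone increasing, the $Y_\gamma$-seminorm behaves like a weighted $\mathcal{H}^2$ norm whose weights grow, and an argument analogous to the one giving \eqref{eq:CSinequality} (applied to the single factor $B_{k+1}\cdots B_m$ and the function $G/(1-\overline{\alpha_k}e^{i\cdot})$) should show the seminorm can only drop when an inner factor is divided out.

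The main obstacle I anticipate is this last monotonicity step: controlling how the $Y_\gamma$-seminorm of a quotient $H/(1-\overline{\alpha}e^{i\cdot})$ changes when $H$ is replaced by $H\cdot(\text{inner})$. The Fourier coefficients of $H/(1-\overline\alpha e^{i\cdot})$ are the partial sums $\sum_{\ell\le j}\overline{\alpha}^{\,j-\ell}\widehat H(\ell)$, and multiplying $H$ by a Blaschke factor mixes these in a nontrivial way; making the seminorm comparison precise will require either a clean operator-theoretic identity for $\|\cdot\|_{Y_\gamma}$ under multiplication by $\frac{z-\beta}{1-\overline\beta z}$ (the natural candidate being a one-factor analogue of \eqref{eq:Cor2}, now with $Y_\gamma$ in place of $\mathcal{H}^2$), or a direct summation-by-parts estimate that exploits $\Gamma_{n+1}\ge\Gamma_n$. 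I would also keep track of the finiteness-of-zeros hypothesis carefully, since it is what guarantees every intermediate quotient $G_k$ stays in $X_\gamma$ (and hence that all seminorms appearing are finite) and what lets the induction terminate after finitely many steps.
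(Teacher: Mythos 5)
Your proposal follows essentially the same route as the paper's proof: peel the zeros off one at a time, telescope a one-root estimate (the paper's Proposition \ref{thm:reflectroot}, in fact an identity, proved by redoing the Fourier-coefficient computation directly for any $F\in X_\gamma$ with a zero at $\alpha$, so no analyticity in $\mathbb{D}_{1+\epsilon}$ and no density argument is needed), and then compare each intermediate term $\|H_{\alpha_k}\|_{Y_\gamma}$ with the corresponding term involving $G$. The ``main obstacle'' you flag is resolved exactly by your first suggested strategy: under \eqref{eq:increase} the $Y_\gamma$-seminorm is itself an $X_\Gamma$-norm with monotone increasing weights $\Gamma_n=\gamma_{n+1}-\gamma_n$, so the same one-root reflection result applies in $Y_\gamma$ and dividing out each remaining Blaschke factor of $G_k/(1-\overline{\alpha_k}e^{i\cdot})$ can only decrease the seminorm (this is the paper's Lemma \ref{thm:Ygammabound}).
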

 
 From this Theorem, we arrive at the following two Corollaries which encompass the well known Dirichlet and Hardy-Sobolev spaces, defined in the next section.
 
To begin, as an immediate application of Theorem \ref{thm:MR1}, we obtain an alternate proof of \eqref{eq:Cor2}.
 \begin{corollary}\label{thm:MR1Cor1}
Suppose that $\gamma_n$ is monotone increasing such that for any $n\geq 0$, $\gamma_{n+1}-\gamma_n\equiv C,$ for some constant $C>0$. For functions $F \in X_\gamma$ with a finite number of zeros inside the unit disc labeled $\{ \alpha_1,\alpha_2, \dots, \alpha_m \}$, we have the identity
 \begin{equation}  \label{eq:thm1eq2}
  \|G(e^{i \cdot})\|_{X_\gamma}^2 = \|F(e^{i \cdot})\|_{X_\gamma}^2-\sum_{j=1}^m \left( (1-|\alpha_j|^2) \left \| \frac{G(e^{i \cdot})}{1-\overline{\alpha_j}e^{i \cdot}}\right \|_{Y_\gamma}^2 \right).
\end{equation}
\end{corollary}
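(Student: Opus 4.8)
The plan is to obtain the corollary as a two-sided estimate: the inequality "$\leq$" is exactly the conclusion of Theorem \ref{thm:MR1}, since a sequence with $\gamma_{n+1}-\gamma_n\equiv C$ is certainly monotone increasing, has $\gamma_0=0$ after normalization (one may subtract $\gamma_0$ without changing any of the quantities $\Gamma_n=\gamma_{n+1}-\gamma_n$, hence without changing the $X_\gamma$ or $Y_\gamma$ norms appearing here), and trivially satisfies \eqref{eq:increase} because $\gamma_{n+2}-2\gamma_{n+1}+\gamma_n = C-C = 0\geq 0$. So the entire content is the reverse inequality "$\geq$", and for that I would revisit the proof of Theorem \ref{thm:MR1} and check that, under the rigidity hypothesis $\Gamma_n\equiv C$, every inequality used there is in fact an equality.

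Concretely, I expect the proof of Theorem \ref{thm:MR1} to reduce (by peeling off one root at a time via the single-zero inequality \eqref{eq:CSinequality}-type step, and inducting on $m$) to the one-root case $F = b_\alpha\cdot G$, where $b_\alpha(z)=(z-\alpha)/(1-\overline\alpha z)$ is the single Blaschke factor, together with an estimate comparing $\|b_\alpha G\|_{X_\gamma}^2$ to $\|G\|_{X_\gamma}^2 + (1-|\alpha|^2)\|G(e^{i\cdot})/(1-\overline\alpha e^{i\cdot})\|_{Y_\gamma}^2$. The key algebraic identity is that for the Taylor coefficients of $G$ and of $H:=G/(1-\overline\alpha z)$, one can write the coefficients of $b_\alpha G$ in terms of those of $H$ and $\alpha$, and then $\|b_\alpha G\|_{X_\gamma}^2 - \|G\|_{X_\gamma}^2$ becomes a sum $\sum_n \Gamma_n(\cdots)$ whose bracketed term is manifestly nonnegative, the bound being tight precisely when $\Gamma_n$ is \emph{constant} (so that the weighting $\Gamma_n$ can be pulled out of a telescoping/shift combination). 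When $\Gamma_n\equiv C$ this sum collapses: $\|b_\alpha G\|_{X_\gamma}^2 - \|G\|_{X_\gamma}^2 = C\big(\|b_\alpha G\|_{\mathcal{H}^2}^2 - \|G\|_{\mathcal{H}^2}^2 + (1-|\alpha|^2)\|H\|_{\mathcal{H}^2}^2\big)$ up to the $\mathcal{H}^2$-isometry of the Blaschke factor, and the classical identity $b_\alpha H (1-\overline\alpha z) = (z-\alpha)H = zH - \alpha H$ together with $\|zH\|_{\mathcal{H}^2}=\|H\|_{\mathcal{H}^2}$ gives equality. Thus the induction goes through with equality at every stage, yielding \eqref{eq:thm1eq2}.

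The main obstacle I anticipate is bookkeeping rather than a genuine difficulty: one must verify that the telescoping step which introduces the slack $\Gamma_{n+1}-\Gamma_n$ in the proof of Theorem \ref{thm:MR1} is the \emph{only} place an inequality enters, so that setting $\Gamma_{n+1}-\Gamma_n\equiv 0$ really does force equality, and one must track how the $Y_\gamma$ norm of $G/(1-\overline{\alpha_j}e^{i\cdot})$ is formed when the roots are removed in succession (i.e. whether it is the original $G$ or the partial quotient that appears — in the constant-weight case $Y_\gamma = C\cdot\mathcal{H}^2$ and the Blaschke factors are $\mathcal{H}^2$-isometries, so the two agree and the sum telescopes cleanly). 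Once those two points are checked, the corollary follows immediately by combining the "$\leq$" from Theorem \ref{thm:MR1} with the "$\geq$" just established.
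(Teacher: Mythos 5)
Your proposal is correct and follows essentially the same route as the paper: both rest on the exact identity \eqref{eq:mainidentity} together with the observation that when $\Gamma_n\equiv C$ the $Y_\gamma$ norms of the intermediate quotients $H_{\alpha_j}$ and of $G(e^{i\cdot})/(1-\overline{\alpha_j}e^{i\cdot})$ coincide, so the single inequality in Theorem \ref{thm:MR1} becomes an equality (your worry about slack in the one-root step is moot, since that step is already an exact identity by Proposition \ref{thm:reflectroot}). The paper justifies the coincidence by applying Lemma \ref{thm:Ygammabound} in both directions, since \eqref{eq:increase} and \eqref{eq:decrease} hold simultaneously, while you observe directly that $Y_\gamma=C\cdot\mathcal{H}^2$ and Blaschke factors are $\mathcal{H}^2$-isometries; these justifications are equivalent.
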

 
Our second Corollary provides a new inequality on the Hardy-Sobolev norm of the function $G$ involving both the Dirichlet and Hardy norms of $G$. In this Corollary, we provide the bound for the space $W^{1,2}$, but note that the same techniques can be used to create bounds on the spaces $W^{s,2}$, where $s\in \mathbb{N}$. 
  \begin{corollary}\label{thm:MR1Cor2}
   Let $F\in W^{1,2}$ have Blaschke decomposition $F=B\cdot G$. Suppose $F$ has a finite number of roots in $\mathbb{D}$ labeled $\alpha_1,\alpha_2,\dots, \alpha_m$. Then
   \[ \|G\|_{W^{1,2}}^2 \leq \|F\|_{W^{1,2}}^2 - \sum_{j=1}^m (1-|\alpha_j|^2)\left[2 \left \| \frac{G(e^{i\cdot})}{1-\overline{\alpha_j}e^{i\cdot}}\right\|^2_{\mathcal{D}} -\left \| \frac{G(e^{i\cdot})}{1-\overline{\alpha_j}e^{i\cdot}}\right\|^2_{\mathcal{H}^2} \right]. \]
   \end{corollary}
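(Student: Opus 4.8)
The plan is to derive Corollary \ref{thm:MR1Cor2} as a direct specialization of Theorem \ref{thm:MR1} to the weight sequence $\gamma_n = n^2$, followed by an elementary rewriting of the resulting $Y_\gamma$-norm in terms of the classical Dirichlet and Hardy norms.

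First I would check that $\gamma_n = n^2$ is admissible for Theorem \ref{thm:MR1}: it is monotone increasing, not identically zero, satisfies $\gamma_0 = 0$, and its second difference is $\gamma_{n+2} - 2\gamma_{n+1} + \gamma_n = 2 \geq 0$, so \eqref{eq:increase} holds. For this choice of weights $X_\gamma$ is the Hardy--Sobolev space $W^{1,2}$: writing the Taylor coefficients of $G$ as $c_k$, one has $\|G\|_{X_\gamma}^2 = \sum_{k} k^2 |c_k|^2$, and the zeroth-order piece of the $W^{1,2}$-norm equals $\|G\|_{\mathcal{H}^2}^2 = \|F\|_{\mathcal{H}^2}^2$ by the Blaschke isometry, hence is common to $F$ and $G$ and passes through the inequality unchanged. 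Thus Theorem \ref{thm:MR1} applies and gives, for $F \in W^{1,2}$ with finitely many zeros $\alpha_1, \dots, \alpha_m$ in $\mathbb{D}$,
\[ \|G\|_{W^{1,2}}^2 \;\leq\; \|F\|_{W^{1,2}}^2 \;-\; \sum_{j=1}^m (1-|\alpha_j|^2)\left\|\frac{G(e^{i\cdot})}{1-\overline{\alpha_j}e^{i\cdot}}\right\|_{Y_\gamma}^2. \]

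Next I would compute the weight differences $\Gamma_n = \gamma_{n+1} - \gamma_n = (n+1)^2 - n^2 = 2n+1$ and use the single algebraic identity $2n+1 = 2(n+1) - 1$: for every $H(z) = \sum_{k\geq 0} b_k z^k$,
\[ \|H\|_{Y_\gamma}^2 \;=\; \sum_{k\geq 0}(2k+1)|b_k|^2 \;=\; 2\sum_{k\geq 0}(k+1)|b_k|^2 - \sum_{k\geq 0}|b_k|^2 \;=\; 2\,\|H\|_{\mathcal{D}}^2 - \|H\|_{\mathcal{H}^2}^2, \]
where $\|\cdot\|_{\mathcal{D}}$ is the Dirichlet norm normalized by $\|H\|_{\mathcal{D}}^2 = \sum_k (k+1)|b_k|^2$. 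Applying this with $H = G(e^{i\cdot})/(1-\overline{\alpha_j}e^{i\cdot})$---which lies in $\mathcal{D}$ since $G \in W^{1,2}$ and multiplication by $(1-\overline{\alpha_j}z)^{-1}$ is bounded on these spaces when $|\alpha_j| < 1$---and substituting into the displayed inequality yields exactly the claimed estimate.

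Since everything reduces to Theorem \ref{thm:MR1} plus a one-line identity, I do not anticipate a serious obstacle; the points that need care are purely bookkeeping. One must pin down the normalizations so that $\|H\|_{Y_\gamma}^2 = 2\|H\|_{\mathcal{D}}^2 - \|H\|_{\mathcal{H}^2}^2$ is an exact equality (not merely an equivalence of norms), which is what forces the Dirichlet normalization $\sum_k(k+1)|b_k|^2$; with this choice the correction term $2\|H\|_{\mathcal{D}}^2 - \|H\|_{\mathcal{H}^2}^2 = \sum_k(2k+1)|b_k|^2$ is nonnegative, so the estimate is a genuine sharpening of $\|G\|_{W^{1,2}} \leq \|F\|_{W^{1,2}}$. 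Finally, the remark about $W^{s,2}$ for $s \in \mathbb{N}$ follows the same template: since $x \mapsto x^{2s}$ is convex, $\gamma_n = n^{2s}$ again satisfies \eqref{eq:increase}, and expanding $\gamma_{n+1}-\gamma_n$ as a polynomial in $n$ and replacing each monomial $n^r$ by the corresponding (integer- or half-integer-order) weighted norm turns the conclusion of Theorem \ref{thm:MR1} into an explicit inequality among classical smoothness norms.
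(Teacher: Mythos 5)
Your proposal is correct and follows essentially the same route as the paper: apply Theorem \ref{thm:MR1} with $\gamma_n = n^2$, use $\|G\|_{\mathcal{H}^2} = \|F\|_{\mathcal{H}^2}$ to pass from the $X_\gamma$-norms to the full $W^{1,2}$-norms, and convert the $Y_\gamma$-norm via the identity $\gamma_{n+1}-\gamma_n = 2(n+1)-1$, i.e.\ $\|\cdot\|_{Y_\gamma}^2 = 2\|\cdot\|_{\mathcal{D}}^2 - \|\cdot\|_{\mathcal{H}^2}^2$ with the paper's normalization $\|H\|_{\mathcal{D}}^2=\sum_k(k+1)|b_k|^2$. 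Your bookkeeping of the normalizations and the verification of condition \eqref{eq:increase} match the paper's argument, so no further changes are needed.
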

\medskip

 From here, we investigate the case when $\{\gamma_n\}$ is increasing at a decreasing rate. That is, for any $n\geq 0$, we require $\Gamma_n$ to be monotone decreasing. In other words,

\begin{equation}\label{eq:decrease}
 \forall n\geq 0, \quad \gamma_{n+2}-2\gamma_{n+1}+\gamma_n \leq 0.
\end{equation}
 As an example, we may consider
\[\gamma_n=\sum_{k=1}^n \frac{1}{k^\beta},\]
for some $ \beta \in \mathbb{N}$. If $\beta=1$, then the space $X_\gamma$ will be equivalent to the space of functions that have Fourier coefficients that satisfy
\[\sum \log(n)|a_n|^2 <\infty,\]
and if $\beta > 1$, then $X_\gamma$ will be equivalent to $\mathcal{H}^2$. In either case, $Y_\gamma$ will be a $\beta$-weighted Bergman space, a topic of interest in \cite{Weighted:2018}. 

We begin our study of spaces $X_\gamma$, where $\gamma_n$ satisfies \eqref{eq:decrease} with a result regarding functions with a finite number of roots in $\mathbb{D}$.

 \begin{theorem}\label{thm:MR2} Suppose that $\gamma_n$ is a monotone increasing sequence satisfying \eqref{eq:decrease}. For functions $F \in X_\gamma$ with a finite number of zeros inside $\mathbb{D}$ labeled $\alpha_1, \alpha_2,\dots,\alpha_m$, we have
\begin{equation} \label{eq:thm2eq1}
\|G(e^{i \cdot})\|_{X_\gamma}^2  \leq \|F(e^{i \cdot})\|_{X_\gamma}^2 - \sum_{j=1}^m \left( (1-|\alpha_j|^2) \left \| \frac{F(e^{i \cdot})}{e^{i\cdot}-\alpha_j}\right \|_{Y_\gamma}^2 \right).
\end{equation}

\end{theorem}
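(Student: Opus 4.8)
The plan is to mirror the strategy that proves Theorem \ref{thm:MR1}, but to exploit inequality \eqref{eq:decrease} in the opposite direction, which forces us to express the error term using $F$ rather than $G$. I would first reduce to the single-root case: if $F$ has roots $\alpha_1,\dots,\alpha_m$, write $F = b_{\alpha_1}\cdot F_1$ where $b_{\alpha_1}(z) = (z-\alpha_1)/(1-\overline{\alpha_1}z)$ is the single Blaschke factor and $F_1\in X_\gamma$ has roots $\alpha_2,\dots,\alpha_m$ and the same outer part $G$. If the one-root estimate
\[
\|F_1(e^{i\cdot})\|_{X_\gamma}^2 \leq \|F(e^{i\cdot})\|_{X_\gamma}^2 - (1-|\alpha_1|^2)\left\|\frac{F(e^{i\cdot})}{e^{i\cdot}-\alpha_1}\right\|_{Y_\gamma}^2
\]
holds, then iterating it $m$ times and observing that at each stage the relevant quotient $F_k/(\,\cdot - \alpha_k)$ has the same outer factor up to a unimodular Blaschke product (so the $Y_\gamma$ norm only changes through the Blaschke factors, which must be handled carefully), I can sum the telescoping estimates. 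The cleanest route is probably to prove directly that for the single factor $b_\alpha$,
\[
\|b_\alpha F_1\|_{X_\gamma}^2 - \|F_1\|_{X_\gamma}^2 \geq (1-|\alpha|^2)\left\|\frac{b_\alpha F_1}{\cdot - \alpha}\right\|_{Y_\gamma}^2 = (1-|\alpha|^2)\left\|\frac{F_1}{1-\overline{\alpha}\,\cdot}\right\|_{Y_\gamma}^2,
\]
and then, because $F/(e^{i\cdot}-\alpha_j) = F_{j}\cdot(\text{product of the remaining Blaschke factors})/(1-\overline{\alpha_j}e^{i\cdot})$ has the same modulus on $\partial\mathbb{D}$ as $F_j/(1-\overline{\alpha_j}e^{i\cdot})$ only up to those extra unimodular factors, reconcile the bookkeeping so the final sum is in terms of $F$ itself.

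The computational heart is the single-root identity. Writing $F_1 = \sum_j c_j z^j$ and $b_\alpha(z) = (z-\alpha)(1 + \overline{\alpha}z + \overline{\alpha}^2 z^2 + \cdots)$, one expands $b_\alpha F_1 = \sum_j d_j z^j$ with the $d_j$ linear in the $c_k$, forms $\|b_\alpha F_1\|_{X_\gamma}^2 - \|F_1\|_{X_\gamma}^2 = \sum_j \gamma_j(|d_j|^2 - |c_j|^2)$ (noting $|d_j|$ and $|c_j|$ agree in total $\mathcal{H}^2$ mass since $|b_\alpha|=1$ on the boundary, which is the $\gamma_j\equiv 1$ case), and then performs an Abel summation-by-parts that converts $\sum_j \gamma_j(|d_j|^2-|c_j|^2)$ into a sum weighted by $\Gamma_j = \gamma_{j+1}-\gamma_j$. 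This is exactly the manipulation underlying \eqref{eq:CSinequality}; the difference is that, because $\Gamma_j$ is now \emph{decreasing} rather than increasing, the inequality that previously gave a lower bound for the $Y_\gamma$ norm of $G/(1-\overline\alpha e^{i\cdot})$ now instead gives a lower bound for the $Y_\gamma$ norm of $F/(e^{i\cdot}-\alpha)$ — the monotonicity of $\Gamma$ controls which of the two naturally appearing quadratic forms dominates. I would carefully track the sign in the summation-by-parts to confirm that \eqref{eq:decrease} is precisely what is needed to land on the $F$-version of the error term.

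The main obstacle I anticipate is the iteration across multiple roots. After peeling off one factor, the quotient $F/(e^{i\cdot}-\alpha_j)$ appearing in \eqref{eq:thm2eq1} is not literally the object to which the single-root estimate applies at stage $j$; the single-root estimate naturally produces $\|F_j/(1-\overline{\alpha_j}e^{i\cdot})\|_{Y_\gamma}^2$ where $F_j$ still contains the Blaschke factors for $\alpha_{j+1},\dots,\alpha_m$. On $\partial\mathbb{D}$ these extra factors are unimodular, but the $Y_\gamma$ norm is \emph{not} rotation- or Blaschke-invariant (it weights Taylor coefficients), so one cannot simply discard them. The fix is to choose the order of peeling and to express everything in terms of the original $F$ via the boundary identity $|F(e^{i\cdot})/(e^{i\cdot}-\alpha_j)| = |G(e^{i\cdot})|\cdot|\text{(Blaschke factors)}|/|e^{i\cdot}-\alpha_j|$, keeping the telescoping sum intact while ensuring each error term is the genuine quantity in \eqref{eq:thm2eq1}; alternatively, prove a refined single-root inequality that already carries the full remaining Blaschke product through, so that the induction closes cleanly. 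Verifying that this bookkeeping produces exactly the stated sum — with no leftover cross terms — is where the real care is required.
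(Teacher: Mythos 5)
Your overall architecture is the same as the paper's: peel off one root at a time (Proposition \ref{thm:reflectroot}), telescope to obtain the identity \eqref{eq:mainidentity} whose error terms are $(1-|\alpha_j|^2)\|H_{\alpha_j}\|_{Y_\gamma}^2$ with $H_{\alpha_j} = F_{j-1}/(\cdot-\alpha_j) = F_j/(1-\overline{\alpha_j}\,\cdot)$, and then try to replace these intermediate quantities by $\|F/(e^{i\cdot}-\alpha_j)\|_{Y_\gamma}^2$. One correction first: you place the role of \eqref{eq:decrease} inside the single-root summation-by-parts, but for one root no monotonicity of $\Gamma_n$ is needed at all. Since $F/(\cdot-\alpha_1) = F_1/(1-\overline{\alpha_1}\,\cdot)$ identically on $\mathbb{D}$, Proposition \ref{thm:reflectroot} already gives the exact identity $\|F_1\|_{X_\gamma}^2 = \|F\|_{X_\gamma}^2 - (1-|\alpha_1|^2)\|F/(\cdot-\alpha_1)\|_{Y_\gamma}^2$ for any increasing $\gamma$ with $\gamma_0=0$; the hypothesis \eqref{eq:decrease} only enters at stages $j\ge 2$, in comparing $\|H_{\alpha_j}\|_{Y_\gamma}$ with $\|F/(\cdot-\alpha_j)\|_{Y_\gamma}$.

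That comparison is precisely the step you flag as your ``main obstacle,'' and you do not resolve it, so the proof does not close. Neither of your suggested fixes works as stated: the boundary-modulus identity is of no use because, as you yourself note, the $Y_\gamma$ semi-norm is computed from Taylor coefficients and is not invariant under inner (unimodular) multipliers; and ``a refined single-root inequality carrying the remaining Blaschke product through'' is exactly the assertion that needs proof. The paper's resolution (Lemma \ref{thm:Ygammabound}) is the missing idea: $H_{\alpha_j}$ is obtained from $F/(\cdot-\alpha_j)$ by reflecting the roots $\alpha_1,\dots,\alpha_{j-1}$ across $\partial\mathbb{D}$, and the $Y_\gamma$ semi-norm is itself an $X$-type norm with weights $\Gamma_n=\gamma_{n+1}-\gamma_n$. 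When $\Gamma_n$ is monotone decreasing, one applies Proposition \ref{thm:reflectroot} to the increasing nonnegative weights $\Gamma_0-\Gamma_n$ and uses that the constant-weight norm $X_{\Gamma_0}$ is a multiple of the $\mathcal{H}^2$ norm, hence unchanged by a reflection, to deduce \eqref{eq:Ygammaineq2}: each reflection of a root can only increase the $Y_\gamma$ norm. Chaining this over the $j-1$ reflections gives $\|F/(\cdot-\alpha_j)\|_{Y_\gamma}^2 \le \|H_{\alpha_j}\|_{Y_\gamma}^2$, and substituting into \eqref{eq:mainidentity} yields \eqref{eq:thm2eq1}. Without this argument (or an equivalent one), your bookkeeping across multiple roots cannot produce the stated sum.
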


Similar to our first Theorem, this result connects the $X_\gamma$ norm of $F$ and $G$ by using all of the roots of $F$ in $\mathbb{D}$. However, we no longer have an expression involving $G$ on the right hand side of the inequality. Our next goal is to extend this result to functions with an infinite number of roots in $\mathbb{D}$. To prove such a result, we add two additional conditions to the sequence $\gamma_n$: boundedness and the convergence rate of the sequence to its limit. This gives us the following result.

 \begin{theorem}\label{thm:MR3} Suppose that $\gamma_n \nearrow M$ is a bounded monotone increasing sequence satisfying \eqref{eq:decrease} and $\sum_{n\geq 0}M-\gamma_n < \infty$. For any function $F \in \mathcal{H}^2$ with zeros inside the unit disc labeled in increasing order of magnitude, $\alpha_j$ for $j\in J$, we have 
 \[ \sum_{j\in J}  (1-|\alpha_j|^2) \left \| \frac{F(e^{i \cdot})}{e^{i\cdot}-\alpha_j}\right \|_{Y_\gamma}^2<\infty\]
 and 
\begin{equation} \label{eq:thm3eq1}
\|G(e^{i \cdot})\|_{X_\gamma}^2  \leq \|F(e^{i \cdot})\|_{X_\gamma}^2 - \sum_{j\in J} \left( (1-|\alpha_j|^2) \left \| \frac{F(e^{i \cdot})}{e^{i\cdot}-\alpha_j}\right \|_{Y_\gamma}^2 \right).
\end{equation}

\end{theorem}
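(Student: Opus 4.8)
The plan is to obtain \eqref{eq:thm3eq1} by an \emph{exhaustion} of the zero set: at each finite stage I reduce to Theorem \ref{thm:MR2}, and then I pass to the limit, with the two hypotheses on $\gamma$ supplying exactly the compactness the limit needs. Write $F = B\cdot G$ for the Blaschke decomposition of $F$, so $B=\prod_{j\in J}b_{\alpha_j}$ (the product converges because the zeros of $F\in\mathcal H^2$ obey the Blaschke condition), $G\in\mathcal H^2$ is zero-free, and $\|G\|_{\mathcal H^2}=\|F\|_{\mathcal H^2}$. If $J$ is finite the assertion is exactly Theorem \ref{thm:MR2}, so assume $J=\mathbb N$ with $|\alpha_1|\le|\alpha_2|\le\cdots$. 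For each $N$ put $B_N=\prod_{j=1}^N b_{\alpha_j}$ and $F_N:=B_N\cdot G$. Since $G$ has no zeros in $\mathbb D$, the function $F_N$ has precisely the zeros $\alpha_1,\dots,\alpha_N$ there, $F_N=B_N\cdot G$ is its Blaschke decomposition, and — because $\gamma$ is bounded, so that $\|h\|_{X_\gamma}^2\le M\|h\|_{\mathcal H^2}^2$ — we have $F_N\in\mathcal H^2\subseteq X_\gamma$. Theorem \ref{thm:MR2} therefore applies to $F_N$ and gives
\begin{equation}\label{eq:MR3truncated}
\|G(e^{i\cdot})\|_{X_\gamma}^2\ \le\ \|F_N(e^{i\cdot})\|_{X_\gamma}^2-\sum_{j=1}^N(1-|\alpha_j|^2)\left\|\frac{F_N(e^{i\cdot})}{e^{i\cdot}-\alpha_j}\right\|_{Y_\gamma}^2 .
\end{equation}

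Next I would let $N\to\infty$ term by term. The partial products converge, $B_N\to B$ uniformly on compact subsets of $\mathbb D$, hence $F_N\to F$ and, for each fixed $j$ and $N\ge j$, $\tfrac{F_N(z)}{z-\alpha_j}=\tfrac{(B_N/b_{\alpha_j})(z)\,G(z)}{1-\overline{\alpha_j}z}\to\tfrac{F(z)}{z-\alpha_j}$, all uniformly on compacta and hence coefficientwise; moreover the Taylor coefficients of $F_N$ are bounded, uniformly in $N$ and in the index, by $\|G\|_{\mathcal H^2}$, and those of $\tfrac{F_N}{z-\alpha_j}$ by $(1-|\alpha_j|)^{-1}\|G\|_{\mathcal H^2}$. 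Rewriting $\|h\|_{X_\gamma}^2=M\|h\|_{\mathcal H^2}^2-\sum_{n\ge0}(M-\gamma_n)|\widehat h(n)|^2$ and recalling $\|h\|_{Y_\gamma}^2=\sum_{n\ge0}\Gamma_n|\widehat h(n)|^2$ with $\sum_n\Gamma_n=M<\infty$, the hypothesis $\sum_n(M-\gamma_n)<\infty$ lets me invoke dominated convergence for these series: since $\|F_N\|_{\mathcal H^2}=\|F\|_{\mathcal H^2}$ for every $N$, I get $\|F_N(e^{i\cdot})\|_{X_\gamma}^2\to\|F(e^{i\cdot})\|_{X_\gamma}^2$, and likewise $\big\|\tfrac{F_N(e^{i\cdot})}{e^{i\cdot}-\alpha_j}\big\|_{Y_\gamma}^2\to\big\|\tfrac{F(e^{i\cdot})}{e^{i\cdot}-\alpha_j}\big\|_{Y_\gamma}^2$ for each fixed $j$.

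To assemble these, fix $K\in\mathbb N$, keep only the first $K$ (nonnegative) summands in \eqref{eq:MR3truncated}, and let $N\to\infty$ through $N\ge K$; this yields
\begin{equation*}
\sum_{j=1}^K(1-|\alpha_j|^2)\left\|\frac{F(e^{i\cdot})}{e^{i\cdot}-\alpha_j}\right\|_{Y_\gamma}^2\ \le\ \|F(e^{i\cdot})\|_{X_\gamma}^2-\|G(e^{i\cdot})\|_{X_\gamma}^2 ,
\end{equation*}
whose right-hand side is finite and independent of $K$. Letting $K\to\infty$ proves both that $\sum_{j\in J}(1-|\alpha_j|^2)\big\|F(e^{i\cdot})/(e^{i\cdot}-\alpha_j)\big\|_{Y_\gamma}^2<\infty$ and, after rearranging, the inequality \eqref{eq:thm3eq1}.

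The crux is the convergence $\|F_N(e^{i\cdot})\|_{X_\gamma}^2\to\|F(e^{i\cdot})\|_{X_\gamma}^2$ in the second step: coefficientwise convergence together with Fatou's lemma only delivers $\|F\|_{X_\gamma}^2\le\liminf_N\|F_N\|_{X_\gamma}^2$, which is the wrong inequality for \eqref{eq:MR3truncated} to survive the limit. (In fact $\|F_N\|_{X_\gamma}$ is nondecreasing in $N$, since $F_{N+1}=b_{\alpha_{N+1}}F_N$ and, $\gamma$ being a bounded increasing sequence, multiplication by a single Blaschke factor does not lower the $X_\gamma$-norm.) Splitting the $X_\gamma$-norm as an $\mathcal H^2$-norm minus an $\ell^2(M-\gamma)$-tail and using the uniform bound on the coefficients of the $F_N$'s is what turns this into a genuine limit, and this is precisely where $\sum_n(M-\gamma_n)<\infty$ is used (boundedness of $\gamma$ being needed separately, to place every $F_N$ in $X_\gamma$ so that Theorem \ref{thm:MR2} is even applicable). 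The remaining points — bookkeeping of zeros with multiplicity, the uniform-on-compacta estimates, and interchanging $\lim_N$ with the finite sum $\sum_{j\le K}$ — are routine.
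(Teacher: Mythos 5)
Your proof is correct, but it takes a genuinely different route from the paper. The paper keeps $F$ fixed and peels its zeros off one at a time: it uses the exact identity of Lemma \ref{thm:nreflections} for the partial reflections $F_n$ (tail Blaschke product times $G$), bounds each $\|H_{\alpha_j}\|_{Y_\gamma}$ from below via Lemma \ref{thm:Ygammabound}, proves convergence of the series by combining the Blaschke condition with the uniform bound $\sup_j\|G/(1-\overline{\alpha_j}e^{i\cdot})\|_{Y_\gamma}^2<\infty$ of Lemma \ref{thm:boundedGsup} (this is where the paper spends the hypothesis $\sum_n (M-\gamma_n)<\infty$, via a Leibniz-rule computation), and finally uses boundedness of $\gamma$ through Lemma \ref{thm:FmtoG} to get $\|F_n\|_{X_\gamma}\to\|G\|_{X_\gamma}$. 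You instead approximate $F$ from the other end, by $F_N=B_N\cdot G$ with only the first $N$ zeros, apply the finite-zero Theorem \ref{thm:MR2} verbatim at each stage, and pass to the limit in $N$ by dominated convergence on the coefficient series, writing $\|h\|_{X_\gamma}^2=M\|h\|_{\mathcal H^2}^2-\sum_n(M-\gamma_n)|\widehat h(n)|^2$ and exploiting $\|F_N\|_{\mathcal H^2}\equiv\|F\|_{\mathcal H^2}$; the hypothesis $\sum_n(M-\gamma_n)<\infty$ enters there (and in $\sum_n\Gamma_n=M<\infty$ for the $Y_\gamma$ limits) rather than through any sup bound. Your two-step limit ($N\to\infty$ with $K$ terms retained, then $K\to\infty$) makes the summability of the infinite series automatic, since the partial sums are bounded by $\|F\|_{X_\gamma}^2-\|G\|_{X_\gamma}^2$; consequently you avoid Lemma \ref{thm:boundedGsup} entirely and never need the Blaschke condition beyond convergence of the Blaschke product itself. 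What each approach buys: yours is more self-contained and arguably simpler, reducing the infinite case cleanly to Theorem \ref{thm:MR2} plus soft limiting arguments; the paper's argument works directly with the identity \eqref{eq:mainidentity} and produces the quantitative intermediate information ($\sup_j\|H_{\alpha_j}\|_{Y_\gamma}^2<\infty$ and the two-sided comparisons) that is of independent use elsewhere in the paper. Your correct identification of the crux -- that Fatou alone gives the inequality in the wrong direction for $\|F_N\|_{X_\gamma}^2\to\|F\|_{X_\gamma}^2$, and that the $\ell^2(M-\gamma)$ tail decomposition fixes this -- is exactly the point that makes your variant rigorous.
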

\vspace{.5 in}

Within the proofs of Theorem \ref{thm:MR1}, Theorem \ref{thm:MR2}, and Theorem \ref{thm:MR3}, we also obtain an identity (seen later as \eqref{eq:mainidentity}) that can be used to show how a Blaschke decomposition redistributes the magnitude of the Fourier coefficients of a function $F\in \mathcal{H}^2$  . Simply put, if $F=B\cdot G$ has $m$ roots in $\mathbb{D}$ labeled $\alpha_1,\dots \alpha_m$, where
\[F(z) = \sum_{n=0}^\infty a_n z^n \quad \text{and} \quad G(z) = \sum_{n=0}^\infty b_n z^n,\]
then for any $k> 0$,  a result by Qian in \cite{Qian:2014} states
\begin{equation}\label{eq:Qianineq}
\sum_{n=k}^\infty |b_n|^2 \leq \sum_{n=k}^\infty |a_n|^2.
\end{equation}
In our work, by selecting the sequence
\begin{equation}
\gamma_n = \begin{cases}
0 & n<k\\
1 & n\geq k
\end{cases}
 \end{equation}
 and applying \eqref{eq:mainidentity} with the appropriate substitutions, we obtain the following identity, an improvement of the inequality \eqref{eq:Qianineq}.
\begin{equation}
\sum_{n=k}^\infty |b_n|^2 = \left(\sum_{n=k}^\infty |a_n|^2\right) - \frac{1}{k!}\sum_{j=1}^m \left((1-|\alpha_j|^2) \left| \frac{d}{d^k}\left[ \frac{F(\cdot)}{\prod_{\ell=1}^j (1-\overline{\alpha_\ell}\cdot)}\right](0)\right|^2 \right).
\end{equation}

The remainder of this paper is organized as follows:

 In section 2, technical definitions and a review of the literature are provided.
 
 In section 3, we prove the main results.

\section{Background and Definitions}
In this section, we briefly summarize existing works on the topic, and provide definitions and notations that will be used throughout the paper.

\subsection{Definitions and Notation}

We begin this section with the definition of the Hardy space on the complex unit disc, $\mathcal{H}^p(\mathbb{D})$, which will also be denoted $\mathcal{H}^p$ for simplicity.

The space $\mathcal{H}^p$ for $0 < p < \infty$ is the collection of analytic functions $F$ on the open unit disk that satisfy
\begin{equation}\label{eq:Hpnorm}
\|F\|_{\mathcal{H}^p}:=\sup_{0 < r < 1} \Big( \frac{1}{2 \pi} \int_0^{2\pi} | F(r e^{i \theta})|^p d \theta \Big) ^{\frac{1}{p}} <  \infty .
\end{equation}

Of significant importance is the Hardy space $\mathcal{H}^2$. In this space, one can determine the $\mathcal{H}^2$ norm of a function using only its Fourier coefficients. That is, given $F\in \mathcal{H}^2$, if for all $z\in \mathbb{D}$ we express 
\[F(z) = \sum_{n= 0}^\infty a_n z^n,\]
 then 
\begin{equation}\label{eq:H2norm}
\|F\|^2_{\mathcal{H}^2} = \sum_{n=0}^\infty |a_n|^2. 
\end{equation}
The proof of this result can be found in \cite{Zygmund:2002}.

Next, we define some previously mentioned Hilbert spaces, which are examples of weighted Hardy spaces. 

The \textit{Dirichlet Space}, denoted $\mathcal{D}$, is the space of $\mathcal{H}^2$ functions who's derivatives are also in $\mathcal{H}^2$. Functions in $\mathcal{D}$ also satisfy the identity
\begin{equation}\label{eq:Dirnorm}
\|F\|_\mathcal{D}^2 := \sum_{n=0}^\infty (n+1)|a_n|^2.
\end{equation}

The \textit{Hardy-Sobolev Spaces}, denoted $W^{s,2}$, are the spaces of $\mathcal{H}^2$ functions with $s$ weak derivatives in $\mathcal{H}^2$. These functions have associated norms that satisfy the identity
\begin{equation}
\|F\|_{W^{s,2}}^2 = \sum_{n=0}^\infty (n^{2}+1)^s|a_n|^2.
\end{equation}

As mentioned in the previous section, the Decomposition Theorem states that a function $F \in \mathcal{H}^p$ can be decomposed into the product of two functions
\begin{equation}\label{eq:decomposition}
F=B\cdot G.
\end{equation}
The second function, $G$, will be in the space $\mathcal{H}^p$, have no zeros in $\mathbb{D}$, and will satisfy $\|G\|_{\mathcal{H}^p}=\|F\|_{\mathcal{H}^p}$ . We may further impose the requirement that $G(0)$ is real valued and nonnegative, but for this paper, we make no such requirement as it will not affect any of our normed spaces.

The first function, $B$, will be unimodular, that is $|B| = 1$ a.e. on $\partial \mathbb{D}$, and will have the same zeros as $F$ inside $\mathbb{D}$. Moreover, it will be a Blaschke product.

A function $B$ is a Blaschke product if it is of the form  
\[B(z) = e^{i \phi} z^m \prod_{j \in J} \frac{\alpha_j-z}{1-\bar \alpha_j z}, \]
where $\phi \in [0,2\pi),$ $m$ is finite, $J = \{1,2,3,\dots, N\}$, where $N$ can be finite or infinity, and for any $j \in J, 0<|\alpha_j|<1 $. If $N=\infty$, we require an addition constraint on the zero set called the Blaschke condition. That is, if $\{\alpha_j\}$ is an infinite set, then 
\begin{equation} \label{eq:BlaschkeCondition}
\sum_{j=0}^\infty (1-|\alpha_j|) < \infty.
\end{equation}
Simply put, the Blaschke condition states that the set of zeros of an $\mathcal{H}^p$ function must accumulate to $\partial \mathbb{D}$ in a controlled way.

With these definitions and results, we now briefly review the existing literature.

\subsection{Nonlinear Phase Unwinding}
In the PhD thesis of Nahon \cite{Nahon:2010}, the concept of performing an unwinding series was first introduced. The idea is that given a $2 \pi$ periodic, real valued signal, $s$, we can use the Hilbert transform to create a function $F\in\mathcal{H}^2(\mathbb{D})$ whose real part agrees with the signal on $\partial \mathbb{D}$. That is, 
\[F=s+i H(s),\]
where $H(s)$ is the Hilbert transform of $s$.  From here, the unwinding series can be produced in the following way.

We begin with the decomposition
\[F=B_0 \cdot G_0.\]
Since $G_0\in \mathcal{H}^2$, by adding and subtracting the term $G_0(0)$, we can introduce a root at the origin for the function $G_0-G_0(0)$. This implies that we can decompose 
\[G_0-G_0(0)=B_1\cdot G_1.\]
Similarly, for any $n\geq 0$, we can iteratively define 
\[G_n-G_n(0) = B_{n+1}\cdot G_{n+1}.\]
 With all of this, through adding and subtracting the terms $G_j(0)$, for $0\leq j \leq n$, we can expand $F$ into its partial unwinding series
 \begin{equation}
  F = G_0(0)B_0 + G_1(0) B_0B_1 + \dots + G_n(0) \prod_{j=1}^n B_j + \prod_{j=1}^n B_j( G_n-G_n(0)).
  \end{equation}

With this series, numerical experiments were performed and early results showed that the partial unwinding series of a function converged at an exponential rate to $F$. This provided a method of approximating the original signal, $s$, using the real part of the partial unwinding series.

Several years later, Coifman and Steinerberger further investigated the unwinding series of functions in the two articles \cite{C.S.:2017,C.S.:2018}.

In the first paper, the definitions of the weighted Hardy spaces, $X_\gamma$ and $Y_\gamma$, were introduced and the results mentioned in the previous section were proven. This was the main inspiration of this work. 

In the second paper, the unwinding series was studied for functions with small (in the $L^2(\partial\mathbb{D})$ sense) antiholomorphic components and an elementary result about the equivalence of the Fourier series and unwinding series for a certain class of functions was shown.

Since then, Coifman and Peyriere have shown the convergence of the unwinding series for any $\mathcal{H}^p$ function with $p\geq 1$ in \cite{Coif:2018}
. While the convergence has been shown, the arguments are based on invariant spaces and do not tell us about the rate of convergence of the unwinding series.

%

\subsection{Adaptive Fourier Decomposition}
While nonlinear phase unwinding was being studied, Qian et al. have also worked on the related topic of Adaptive Fourier Decomposition (AFD). The main distinction between these two procedures is that at each step of AFD, instead of adding and subtracting the term $G_j(0)$ (as was done in the unwinding series), an algorithm seeks the ''optimal" point $a_j\in \mathbb{D}$ that will provide the best finite approximation if we add and subtract $G_j(a_j))$. The existence of the optimal points $a_j$, for each step $j$, has been shown in \cite{Qian:2010}, however there are no closed formulas for the explicit computation of such points (See \cite{Qian:2017} for related results.). Therefore, there is a computational cost in approximating the optimal points that is avoided in the unwinding series. We are not aware of results that compare the convergence rate of the AFD algorithm with the convergence rate of the unwinding series.

\section{Proof of Main Results}
In this section, we develop the theory necessary to prove Theorems \ref{thm:MR1}, \ref{thm:MR2}, and \ref{thm:MR3}. This section will be broken into three parts for clarity. 

In the first part, see Section \ref{Part1}, we expand upon the relationship between the functions $F$ and $G$ in the decomposition theorem to see that reflecting the roots of $F$ across $\partial \mathbb{D}$ provides a method of producing $G$. With this knowledge, we study how the act of reflecting a root in $\mathbb{D}$ across the unit circle affects the $X_\gamma$ norm of a function, seen in Proposition \ref{thm:reflectroot}. To end this part, we state Corollary \ref{thm:singlereflection} which provides an identity for the case when $F$ has a single root in $\mathbb{D}$. 

In the second part, see Section \ref{Part2}, we investigate the case when $F$ has finitely many roots in $\mathbb{D}$. We begin by defining intermediate functions (seen in Definition \ref{def:Fj}) that can be viewed as partial decompositions. Using these functions, we invoke Proposition \ref{thm:reflectroot} to obtain an identity seen in Lemma \ref{thm:nreflections}, which connects the $X_\gamma$ norms of $F$ and $G$. From there, we state and prove Lemma \ref{thm:Ygammabound}, which provides bounds on the $Y_\gamma$ norm of functions based on the growth rate of the sequence $\gamma_n$. With this, we have the tools necessary to handle all functions with a finite number of zeros in $\mathbb{D}$, and prove Theorem \ref{thm:MR1}, Theorem \ref{thm:MR2}, Corollary \ref{thm:MR1Cor1}, and Corollary \ref{thm:MR1Cor2}.

 In the final part, see Section \ref{Part3}, we prove the convergence of the inequality in Theorem \ref{thm:MR2} for functions with infinitely many roots in $\mathbb{D}$ at the cost of imposing further conditions on $\gamma_n$ (see Lemma \ref{thm:FmtoG} and Lemma \ref{thm:boundedGsup}). From there, we directly prove Theorem~\ref{thm:MR3}.


\subsection{Part 1: Reflecting a Root Across $\partial \mathbb{D}$}\label{Part1}

To begin, suppose that a function $F$ has a finite number of roots in $\mathbb{D}$ labeled in increasing order of magnitudes $\alpha_1, \alpha_2,\dots,\alpha_m$, where the roots need not be distinct. Then for $z\in \mathbb{D}$, we can express

\[F(z) = \left(\prod_{j=1}^m \frac{z-\alpha_j}{1-\overline{\alpha_j}z}\right) \cdot G(z).\]

One way of generating the function $G$ from $F$ is by replacing each term in the Blaschke product $z-\alpha_j$ with the term $1-\overline{\alpha_j}z$. This can be viewed as the reflection of each zero of $F$ across the complex unit circle. When all zeros are reflected, we will have $m$ removable singularities, all outside $\mathbb{D}$, so we will have a function equivalent to $G$ on $\mathbb{D}$. 

In the case where $F$ has infinitely many roots in $\mathbb{D}$, we can apply the same technique, and rely on several Hardy Space results to show that this process will provide a sequence of functions that converge to $G$ uniformly on compact subsets of $\mathbb{D}$. This will be further discussed in the third part of this section. 

With this knowledge, we want to investigate how the act of reflecting a single root across the unit circle changes the $X_\gamma$ norm of a function. To understand how a single reflection works, we first define the following operator.

\begin{definition}\label{def:Halphaphi}
Suppose that $F\in \mathcal{H}^2$ has a root at $\alpha \in \mathbb{D}$, that is, $F(\alpha)=0$ and for all $z\in \mathbb{D}$, we can express $F(z)=(z-\alpha)H_\alpha(z)$. We define $\phi_\alpha$ be the operator that acts on functions in $\mathcal{H}^2$ with roots at $\alpha$ and satisfies 
\begin{equation}\label{eq:phialpha}
\phi_\alpha(F(\cdot)) = \phi_\alpha((\cdot-\alpha)H_{\alpha}(\cdot)):= 
(1-\overline{\alpha}\cdot)H_{\alpha}(\cdot).
\end{equation}

\end{definition}

This definition tells us that the operator $\phi_\alpha$ only affects a single root of a function. In the case where a function has a root at $\alpha$ of higher multiplicity, this operator will reduce the multiplicity of the root by one. With this definition, we have the following result on the $\mathcal{H}^2$ norm of $H_\alpha$. 

\begin{lemma} \label{thm:Hbound}
If $F\in \mathcal{H}^2$ satisfies $F(\alpha)=0$ where $|\alpha|<1$, then $H_\alpha\in \mathcal{H}^2$. Further, we have the inequality 
\[\|H_\alpha\|_{\mathcal{H}^2} \le \frac{2}{1-|\alpha|} \|F\|_{\mathcal{H}^2}.\]
\end{lemma}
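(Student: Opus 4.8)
The plan is to work directly with Fourier coefficients. Write $F(z)=\sum_{n\ge 0} a_n z^n$ and $H_\alpha(z)=\sum_{n\ge 0}c_n z^n$. Since $F(z)=(z-\alpha)H_\alpha(z)$, comparing coefficients gives the recursion $a_0=-\alpha c_0$ and $a_n=c_{n-1}-\alpha c_n$ for $n\ge 1$; equivalently $c_n = \frac{1}{\alpha}(c_{n-1}-a_n)$ for $n\ge 1$ with $c_0 = -a_0/\alpha$. This is the natural way to see $H_\alpha\in\mathcal H^2$, but solving the recursion directly produces messy sums, so for the norm bound I would instead use the contour/boundary representation $H_\alpha(z)=F(z)/(z-\alpha)$ and estimate on circles $|z|=r$.

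First I would handle the easy case $|\alpha|\le 1/2$: on $\partial\mathbb D$ we have $|z-\alpha|\ge 1-|\alpha|\ge 1/2$, so $\|H_\alpha\|_{\mathcal H^2}\le \frac{1}{1-|\alpha|}\|F\|_{\mathcal H^2}\le \frac{2}{1-|\alpha|}\|F\|_{\mathcal H^2}$ by passing to boundary values (the boundary function of $F$ is in $L^2(\partial\mathbb D)$ with the same norm, and $F/(z-\alpha)$ is analytic on $\mathbb D$, so its $\mathcal H^2$ norm equals the $L^2$ norm of its boundary values). The substantive case is $1/2<|\alpha|<1$, where $z-\alpha$ genuinely gets small near $z=\alpha/|\alpha|$. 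Here I would split $H_\alpha = F\cdot\frac{1}{z-\alpha}$ and expand $\frac{1}{z-\alpha} = \frac{1}{z}\cdot\frac{1}{1-\alpha/z}$ is not quite right on $\mathbb D$; instead use the Blaschke-type identity $\frac{1}{z-\alpha} = \frac{-1}{\alpha}\cdot\frac{1}{1-z/\alpha}$ and note $1/\alpha$ lies outside $\mathbb D$, so $\frac{1}{1-z/\alpha}=\sum_{k\ge 0}(z/\alpha)^k$ converges on $\mathbb D$; however this still has a pole approaching $\partial\mathbb D$. The cleaner route: use the reproducing-kernel/Cauchy estimate. Since $F(\alpha)=0$, write $F(z)=(z-\alpha)H_\alpha(z)$ and apply the standard fact that for $F\in\mathcal H^2$ vanishing at $\alpha$, $\frac{F(z)}{z-\alpha}$ has $\mathcal H^2$ norm controlled by $\frac{1}{1-|\alpha|^2}$ times something — but to get the explicit constant $\frac{2}{1-|\alpha|}$ I would compute $\|H_\alpha\|_{\mathcal H^2}^2 = \lim_{r\to 1^-}\frac{1}{2\pi}\int_0^{2\pi}\frac{|F(re^{i\theta})|^2}{|re^{i\theta}-\alpha|^2}\,d\theta$ and bound $|re^{i\theta}-\alpha|^2 = r^2 - 2r|\alpha|\cos(\theta-\arg\alpha) + |\alpha|^2$ from below. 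After a shift we need $\inf_\theta (r^2+|\alpha|^2 - 2r|\alpha|\cos\theta)$; this is smallest at $\theta=0$ giving $(r-|\alpha|)^2$, which degenerates. So the pointwise bound alone is insufficient, confirming that the factor-of-two slack and an averaging argument are needed.

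The key step, and the main obstacle, is therefore to exploit that $F$ vanishes at $\alpha$ to beat the naive pointwise estimate. The approach I would take: let $b_\alpha(z)=\frac{\alpha - z}{1-\bar\alpha z}$ be the single-factor Blaschke product, so $F = b_\alpha\cdot\tilde G$ with $\|\tilde G\|_{\mathcal H^2}=\|F\|_{\mathcal H^2}$, and observe $H_\alpha(z) = \frac{F(z)}{z-\alpha} = \frac{-\tilde G(z)}{1-\bar\alpha z}$. Thus $\|H_\alpha\|_{\mathcal H^2} = \left\|\frac{\tilde G}{1-\bar\alpha\,\cdot}\right\|_{\mathcal H^2}$, and expanding $\frac{1}{1-\bar\alpha z}=\sum_{k\ge 0}\bar\alpha^k z^k$ gives, with $\tilde G(z)=\sum d_n z^n$, coefficients $e_n = \sum_{k=0}^n \bar\alpha^{n-k}d_k$. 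By Cauchy–Schwarz, $|e_n|^2 \le \left(\sum_{k=0}^n |\alpha|^{n-k}\right)\left(\sum_{k=0}^n|\alpha|^{n-k}|d_k|^2\right)\le \frac{1}{1-|\alpha|}\sum_{k=0}^n|\alpha|^{n-k}|d_k|^2$, and summing over $n$ and swapping order gives $\sum_n|e_n|^2 \le \frac{1}{1-|\alpha|}\sum_k|d_k|^2\sum_{n\ge k}|\alpha|^{n-k} = \frac{1}{(1-|\alpha|)^2}\|\tilde G\|_{\mathcal H^2}^2 = \frac{1}{(1-|\alpha|)^2}\|F\|_{\mathcal H^2}^2$. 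Taking square roots yields $\|H_\alpha\|_{\mathcal H^2}\le \frac{1}{1-|\alpha|}\|F\|_{\mathcal H^2}\le \frac{2}{1-|\alpha|}\|F\|_{\mathcal H^2}$, which is in fact slightly stronger than stated. I would present this Young's-inequality-for-convolutions computation as the core of the proof; the only care needed is justifying $\|H_\alpha\|_{\mathcal H^2}=\|\tilde G/(1-\bar\alpha\cdot)\|_{\mathcal H^2}$ via the removable singularity of $F/(z-\alpha)$ inside $\mathbb D$ and the inner-outer factorization $F=b_\alpha\tilde G$, both of which are classical Hardy space facts I would cite.
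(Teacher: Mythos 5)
Your final argument is correct, and it actually proves more than the lemma asks: the factorization $F=b_\alpha\tilde G$ with $\|\tilde G\|_{\mathcal{H}^2}=\|F\|_{\mathcal{H}^2}$, the identity $H_\alpha=-\tilde G/(1-\bar\alpha z)$, and the Cauchy--Schwarz estimate on the coefficient convolution with the geometric sequence $\{\bar\alpha^k\}$ give $\|H_\alpha\|_{\mathcal{H}^2}\le\frac{1}{1-|\alpha|}\|F\|_{\mathcal{H}^2}$, which implies the stated bound. This is, however, a genuinely different route from the paper, and your reason for abandoning the direct route is mistaken. The paper's proof is precisely the pointwise estimate you declared insufficient: since $F(\alpha)=0$, the singularity of $H_\alpha=F/(z-\alpha)$ at $\alpha$ is removable, so $H_\alpha$ is analytic on $\mathbb{D}$ and its integral means over $|z|=r$ are nondecreasing in $r$; hence only radii $r$ close to $1$ matter, and for $r\ge\frac{1+|\alpha|}{2}$ one has $|re^{i\theta}-\alpha|\ge r-|\alpha|\ge\frac{1-|\alpha|}{2}$ uniformly in $\theta$, because the pole sits at the fixed interior point $\alpha$ and never approaches $\partial\mathbb{D}$. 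The quantity $(r-|\alpha|)^2$ only ``degenerates'' if you demand a bound valid for every $r\in(0,1)$ (for instance $r=|\alpha|$), which the monotonicity of the means makes unnecessary; the restriction to $r\ge\frac{1+|\alpha|}{2}$ is exactly where the factor $2$ in the lemma's constant comes from, and no averaging argument is needed. Comparing the two approaches: the paper's is shorter and uses no factorization theory, while yours trades the limiting argument in $r$ for the classical norm equality $\|\tilde G\|_{\mathcal{H}^2}=\|F\|_{\mathcal{H}^2}$ (which you rightly say you would cite), obtains the sharper constant $\frac{1}{1-|\alpha|}$, and is coefficient-based in the same spirit as the convolution estimate the paper itself carries out later in Lemma \ref{thm:boundedGsup}; either proof suffices for all subsequent uses of the lemma, since only finiteness of $\|H_\alpha\|_{\mathcal{H}^2}$ and the qualitative form of the constant are ever used.
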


\begin{proof}
By definition, we know that 
\begin{equation}\label{eq:H2norm}
\|F\|_{\mathcal{H}^2}=\limsup_{0\le r <1} \left( \frac{1}{2\pi} \int_{0}^{2\pi} |F(re^{i\theta})|^2 d\theta \right)^{\frac{1}{2}}=M<\infty.
\end{equation}
Since $\alpha$ is a root of $F$, we know that $\frac{F(\cdot)}{( \cdot - \alpha)}=H_\alpha$ has a removable singularity at $\alpha$, so because $H_\alpha$ can be treated as an analytic function in $\mathbb{D}$,
\[\limsup_{0\le r <1} \int_0^{2\pi} \left|\frac{F(re^{i\theta})}{re^{i\theta}-\alpha}\right|^2 d\theta= \lim_{r\to 1^-}\int_0^{2\pi} \left|\frac{F(re^{i\theta})}{re^{i\theta}-\alpha}\right|^2 d\theta. \]
Since $|\alpha|<1$, we know that if $r\ge \frac{1+|\alpha|}{2}$, for any $\theta \in [0,2\pi]$,
\[\left| re^{i\theta}-\alpha \right| \geq \frac{1-|\alpha|}{2}. \]
Therefore, 
\begin{eqnarray*}
2\pi\|H_\alpha\|_{\mathcal{H}^2}^2 &= &\lim_{r\to 1^-} \int_0^{2\pi} \left|\frac{F(re^{i\theta})}{re^{i\theta}-\alpha}\right|^2 d\theta\\
& \leq & \left(\frac{2}{1-|\alpha|}\right)^2 \lim_{r\to 1^-} \int_{0}^{2\pi} |F(re^{i\theta})|^2 d\theta\\
 &\leq & \left(\frac{2}{1-|\alpha|}\right)^2 2\pi \|F\|_{\mathcal{H}^2}^2 < \infty.
\end{eqnarray*}
Dividing each side by $2\pi$ and taking square roots gives us the result.
\end{proof}

With this lemma proved, we know that the function $H_\alpha$ can be represented with a power series whose coefficients are in $\ell^2$. This allows us to properly study how the operator, $\phi_\alpha$, affects the $X_\gamma$ norm of functions and leads us to the following proposition, which is a generalization of a result in \cite{C.S.:2017}.

\begin{proposition} \label{thm:reflectroot}
 Let $F\in X_\gamma$ satisfy $F(\alpha)=0$ for some $\alpha \in \mathbb{D}$, so that for all $z\in \mathbb{D}$, $F(z)=(z-\alpha) H_\alpha(z)$. With $\phi_\alpha$ defined in \eqref{eq:phialpha}, we have the following results: 
\begin{enumerate}
\item{ $\phi_\alpha(F) \in X_\gamma$ and $H_\alpha \in Y_\gamma$ }

\item{ $  \|\phi_\alpha(F)\|_{X_\gamma}^2=  \|F\|_{X_\gamma}^2- (1-|\alpha|^2)\|H_\alpha\|_{Y_\gamma}^2. $}
\end{enumerate}

\end{proposition}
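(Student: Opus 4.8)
The plan is to reduce everything to a direct computation with power series, exploiting the fact (from Lemma \ref{thm:Hbound}) that $H_\alpha$ has $\ell^2$ Taylor coefficients, so that all the series manipulations below are absolutely convergent and rearrangements are legitimate. Write $H_\alpha(z) = \sum_{n\ge 0} b_n z^n$. Then $F(z) = (z-\alpha)H_\alpha(z)$ has coefficients $a_0 = -\alpha b_0$ and $a_n = b_{n-1} - \alpha b_n$ for $n\ge 1$; similarly $\phi_\alpha(F)(z) = (1-\overline\alpha z)H_\alpha(z)$ has coefficients $c_0 = b_0$ and $c_n = b_n - \overline\alpha b_{n-1}$ for $n\ge 1$. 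The first task is to record $\sum_{n} \gamma_n |a_n|^2$ and $\sum_n \gamma_n |c_n|^2$ in terms of the $b_n$'s, expand the squares $|b_{n-1}-\alpha b_n|^2 = |b_{n-1}|^2 + |\alpha|^2|b_n|^2 - 2\operatorname{Re}(\overline\alpha \,\overline{b_{n-1}} b_n)$ and likewise for $c_n$, and then shift indices so as to collect the coefficient of each $|b_n|^2$ and of each cross term $\operatorname{Re}(\overline\alpha\,\overline{b_{n-1}}b_n)$.

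The key algebraic step is the bookkeeping: after index shifts one finds that in $\|F\|_{X_\gamma}^2$ the coefficient of $|b_n|^2$ is $\gamma_{n+1} + |\alpha|^2\gamma_n$, while in $\|\phi_\alpha(F)\|_{X_\gamma}^2$ it is $\gamma_n + |\alpha|^2\gamma_{n+1}$ (using $\gamma_0 = 0$ to handle the $n=0$ edge terms cleanly), and the cross terms $-2\operatorname{Re}(\overline\alpha\,\overline{b_{n-1}}b_n)$ appear in $\|F\|_{X_\gamma}^2$ with weight $\gamma_n$ and in $\|\phi_\alpha(F)\|_{X_\gamma}^2$ with weight $-\gamma_n$... wait, one must track the conjugation carefully: in $\phi_\alpha$ the cross term is $-2\operatorname{Re}(\overline{\overline\alpha b_{n-1}} b_n) = -2\operatorname{Re}(\alpha\,\overline{b_{n-1}}b_n)$, so it is the complex conjugate of the $F$ cross term and hence has the same real part. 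Subtracting, the cross terms cancel entirely, and one is left with
\[
\|F\|_{X_\gamma}^2 - \|\phi_\alpha(F)\|_{X_\gamma}^2 = \sum_{n\ge 0}\bigl[(\gamma_{n+1}+|\alpha|^2\gamma_n) - (\gamma_n + |\alpha|^2\gamma_{n+1})\bigr]|b_n|^2 = (1-|\alpha|^2)\sum_{n\ge 0}(\gamma_{n+1}-\gamma_n)|b_n|^2,
\]
and the right-hand sum is exactly $(1-|\alpha|^2)\|H_\alpha\|_{Y_\gamma}^2$, which gives part (2). Part (1) follows along the way: finiteness of $\|H_\alpha\|_{Y_\gamma}^2$ is immediate since $\gamma_{n+1}-\gamma_n$ is bounded (monotone convergence is not needed — if $\gamma$ is unbounded one still has $\gamma_{n+1}-\gamma_n \le \gamma_n$ eventually? no) — more carefully, $\|H_\alpha\|_{Y_\gamma}^2 = \tfrac{1}{1-|\alpha|^2}(\|F\|_{X_\gamma}^2 - \|\phi_\alpha(F)\|_{X_\gamma}^2) \le \tfrac{1}{1-|\alpha|^2}\|F\|_{X_\gamma}^2 < \infty$ once we know $\phi_\alpha(F)\in X_\gamma$, and $\phi_\alpha(F)\in X_\gamma$ follows from $|c_n|^2 \le 2|b_n|^2 + 2|\alpha|^2|b_{n-1}|^2$ together with the bound $\gamma_n |b_n|^2 \le (\gamma_{n+1}+|\alpha|^2\gamma_n)|b_n|^2$ summing to at most $\|F\|_{X_\gamma}^2$ after the same shift. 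So the cleanest order is: first show $\phi_\alpha(F)\in X_\gamma$ via the coefficient bound, then derive the identity in (2) by the cancellation computation, then read off $H_\alpha\in Y_\gamma$ from the identity.

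I expect the only real obstacle to be the index-shifting bookkeeping near $n=0$: one must be careful that the $n=0$ terms of $F$ and $\phi_\alpha(F)$ (namely $|a_0|^2 = |\alpha|^2|b_0|^2$ with weight $\gamma_0$, and $|c_0|^2 = |b_0|^2$ with weight $\gamma_0$) fit the general pattern, which they do precisely because $\gamma_0 = 0$ makes those boundary contributions vanish. Apart from that, convergence of all the interchanged sums is guaranteed by Lemma \ref{thm:Hbound} (so $\{b_n\}\in\ell^2$) together with monotonicity of $\{\gamma_n\}$, so no subtlety arises there; this is why Lemma \ref{thm:Hbound} was proved first.
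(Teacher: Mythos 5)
Your coefficient bookkeeping is correct and is in fact the same computation the paper performs, but the justification for carrying it out on the infinite series has a genuine gap. The assertion that $\{b_n\}\in\ell^2$ makes ``all the series manipulations absolutely convergent'' fails as soon as $\gamma_n$ is unbounded (the Dirichlet and Hardy--Sobolev weights, which are the paper's main examples): the diagonal series $\sum_n\gamma_{n+1}|b_n|^2$ and $\sum_n\gamma_n|b_n|^2$, and hence the weighted cross-term series, are not known to converge; their convergence is essentially the statement $H_\alpha\in X_\gamma$, which is stronger than the conclusion $H_\alpha\in Y_\gamma$ being proved and is nowhere established (Lemma \ref{thm:Hbound} only gives $\ell^2$ coefficients). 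So splitting $\|F\|_{X_\gamma}^2$ and $\|\phi_\alpha(F)\|_{X_\gamma}^2$ into diagonal and cross pieces and cancelling the cross pieces is not licensed. Your fallback argument for part (1) does not repair this: the inequality you invoke, $\sum_n(\gamma_{n+1}+|\alpha|^2\gamma_n)|b_n|^2\le\|F\|_{X_\gamma}^2$, is false, because the discarded cross term need not have a favourable sign. Concretely, take $\gamma_n=n$, $\alpha=\tfrac12$, $H_\alpha(z)=1+z$: then $\|F\|_{X_\gamma}^2=\tfrac14\gamma_1+\gamma_2=\tfrac94$, while $(\gamma_1+\tfrac14\gamma_0)|b_0|^2+(\gamma_2+\tfrac14\gamma_1)|b_1|^2=\tfrac{13}{4}$. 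Thus membership $\phi_\alpha(F)\in X_\gamma$ is not established, and without it (or without convergence of the diagonal sums) the term-by-term cancellation is circular.

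The way around this, which is what the paper does, is never to separate the two norms: apply your cross-term cancellation \emph{before} summing, i.e.\ use the pointwise identity $|b_{j-1}-\alpha b_j|^2-|b_j-\overline\alpha b_{j-1}|^2=(1-|\alpha|^2)\bigl(|b_{j-1}|^2-|b_j|^2\bigr)$ for each $j\ge1$, so that the difference of the two partial sums up to $N$ equals, after Abel summation and using $\gamma_0=0$,
\[
(1-|\alpha|^2)\Bigl(\sum_{j=0}^{N-1}(\gamma_{j+1}-\gamma_j)|b_j|^2+\gamma_N|b_N|^2\Bigr).
\]
Letting $N\to\infty$, the only delicate point is the boundary term $\gamma_N|b_N|^2$, which the paper argues tends to $0$ using $\gamma_N|b_{N-1}-\alpha b_N|^2\to0$ and $b_N\to0$; your proposal never meets this term because it was absorbed into the possibly divergent diagonal sums. (If you \emph{did} already know $\phi_\alpha(F)\in X_\gamma$, your splitting could be salvaged, since $|x-\alpha y|^2+|y-\overline\alpha x|^2\ge(1-|\alpha|)^2(|x|^2+|y|^2)$ would give convergence of the diagonal sums; but establishing that membership is exactly where your argument breaks, and it is what the partial-sum argument is designed to deliver.) Any corrected version of your proof must either prove the diagonal sums converge or confront the boundary term; as written it does neither.
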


\begin{proof}
Given $F\in X_\gamma$, we know that $X_\gamma \subseteq \mathcal{H}^2$, so it follows from Lemma~\ref{thm:Hbound} that $H_\alpha \in \mathcal{H}^2$. Therefore, we know that for all $z\in \mathbb{D}$ we may represent 
\begin{equation*}
H_\alpha(z) = \sum_{j=0}^\infty a_j z^j, \quad \text{where } \sum_{j=0}^\infty |a_j|^2<\infty.
\end{equation*}
With this notation, we know that 
\[F(z)=(z-\alpha)H_\alpha(z) \quad \text{and }\quad  \phi_\alpha(F(z)) = (1-\overline\alpha z)H_\alpha(z),\]
so we can express
\[F(z) = (z-\alpha)  \sum_{j=0}^\infty a_j z^j = -\alpha a_0 + \sum_{j=1}^\infty (a_{j-1}-\alpha a_j)z^j,\]
and 
\[\phi_\alpha(F(z)) = (1-\overline\alpha z) \sum_{j=0}^\infty a_j z^j = a_0 + \sum_{j=1}^\infty (a_{j}-\overline\alpha a_{j-1})z^j.\]

With these expressions, we can compute the $X_\gamma$ norm of each function, and obtain
\begin{align}
 \|F\|_{X_\gamma}^2 = \gamma_0|\alpha a_0|^2 + \sum_{j=1}^\infty \gamma_j|a_{j-1}-\alpha a_j|^2,\\
 \|\phi_\alpha(F)\|_{X_\gamma}^2 = \gamma_0|a_0|^2 + \sum_{j=1}^\infty \gamma_j |a_{j}-\overline\alpha a_{j-1}|^2.
\end{align}
From here, if we can show that $\|F\|_{X_\gamma}^2 -  \|\phi_\alpha(F)\|_{X_\gamma}^2\geq 0$, then this will imply that $\phi_\alpha(F) \in X_\gamma$. Towards this end, we consider the following finite difference:
\[ \gamma_0|\alpha a_0|^2 + \sum_{j=1}^N \gamma_j|a_{j-1}-\alpha a_j|^2 -  \left(\gamma_0|a_0|^2 - \sum_{j=1}^N \gamma_j |a_{j}+\overline\alpha a_{j-1}|^2\right). \]
For each $j\in \{1,2,\dots,N\}$, we know that 
\[|a_{j-1}-\alpha a_j|^2 -  |a_{j}-\overline\alpha a_{j-1}|^2 =|a_{j-1}|^2 + |\alpha|^2|a_j|^2 -|a_j|^2 - |\alpha|^2||a_{j-1}|^2=(1-|\alpha|^2)\left(|a_{j-1}|^2-|a_j|^2\right), \]
because $\mathcal{R}e(a_{j-1}\alpha a_j) = \mathcal{R}e(a_j \overline\alpha a_{j-1})$. Therefore,
\begin{eqnarray*}
\gamma_0|\alpha a_0|^2 + \sum_{j=1}^N \gamma_j|a_{j-1}-\alpha a_j|^2 -  \gamma_0|a_0|^2 - \sum_{j=1}^N \gamma_j |a_{j}-\overline\alpha a_{j-1}|^2\\
= -\gamma_0(1-|\alpha|^2)|a_0|^2 + (1-|\alpha|^2) \sum_{j=1}^N \gamma_j \left(|a_{j-1}|^2-|a_j|^2\right) \\
=(1-|\alpha|^2) \left(\left[\sum_{j=0}^{N-1} (\gamma_{j+1}-\gamma_j)|a_j|^2\right] + \gamma_N|a_N|^2 \right). \\
\end{eqnarray*}
Thus, by passing limits into the summation, we have
\[\|F\|_{X_\gamma}^2 -  \|\phi_\alpha(F)\|_{X_\gamma}^2 = \lim_{N\to \infty} (1-|\alpha|^2) \left( \left[\sum_{j=0}^{N-1}(\gamma_{j+1}-\gamma_j)|a_j|^2\right] + \gamma_N|a_N|^2 \right). \]
 
Since $\gamma_n$ is an increasing sequence and $\|F\|_{X_\gamma}^2 < \infty$ we know that for $a_n\neq 0$,
\[0=\lim_{n\to \infty} \gamma_n |a_{n-1}-\alpha a_n|^2 = \lim_{n\to \infty} \gamma_n |a_n|^2 \left| \frac{a_{n-1}}{a_n}-\alpha\right|^2. \]
Since  $H_\alpha \in \mathcal{H}^2$, we know $|a_n| \to 0$, which implies that  $ \lim_{n\to \infty} \left| \frac{a_{n-1}}{a_n}-\alpha\right|^2 \neq 0$, so
 \[\lim_{N\to \infty} \gamma_N|a_N|^2 = 0.\]
By definition,
\[\lim_{N\to \infty} \sum_{j=0}^{N-1}(\gamma_{j+1}-\gamma_j)|a_j|^2 = \|H_\alpha\|_{Y_\gamma}^2,\]
so we have the identity 
\[\|F\|_{X_\gamma}^2 -  \|\phi_\alpha(F)\|_{X_\gamma}^2 = (1-|\alpha|^2)\|H_\alpha\|_{Y_\gamma}^2. \]

Since  $(1-|\alpha|^2)\|H_\alpha\|_{Y_\gamma}^2 \ge 0,$ this immediately tells us that $\|\phi_\alpha(F)\|_{X_\gamma}^2 \leq  \|F\|_{X_\gamma}^2 < \infty$, and that $\|H_\alpha\|_{Y_\gamma}^2 < \infty$, which proves (1). 

Lastly, by rearranging the terms, we have

\[ \|\phi_\alpha(F)\|_{X_\gamma}^2  =  \|F\|_{X_\gamma}^2- (1-|\alpha|^2)\|H_\alpha\|_{Y_\gamma}^2,\]
which completes the proof.

\end{proof}

This result shows us that the reflection of a single root about the complex unit circle will alter the $X_\gamma$ norm of a function in a predictable way, and will always decrease the $X_\gamma$ norm. We end this subsection with a Corollary involving functions with a single root in $\mathbb{D}$.

\begin{corollary}\label{thm:singlereflection}
Let $\gamma_n$ be a monotone increasing sequence with $\gamma_0=0$. If $F\in X_\gamma$ has a single root, $\alpha$, in $\mathbb{D}$, then
\begin{equation}
\|G\|_{X_\gamma} ^2= \|F\|_{X_\gamma} ^2 - (1-|\alpha|^2) \left\|\frac{G(e^{i\cdot})}{1-\overline{\alpha}e^{i\cdot}}\right\|_{Y_\gamma}^2 \ 
\end{equation}
\end{corollary}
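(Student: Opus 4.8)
The plan is to derive the Corollary as an immediate specialization of Proposition~\ref{thm:reflectroot}, after identifying the abstract objects appearing there with $G$ and $G/(1-\overline\alpha\,\cdot)$. First I would invoke the Decomposition Theorem: since $F\in X_\gamma\subseteq\mathcal{H}^2$ has exactly one zero $\alpha$ in $\mathbb{D}$, its Blaschke decomposition reads $F(z)=\frac{z-\alpha}{1-\overline\alpha z}\,G(z)$ for $z\in\mathbb{D}$, with $G$ zero-free on $\mathbb{D}$. Writing $F(z)=(z-\alpha)H_\alpha(z)$ as in Definition~\ref{def:Halphaphi}, this forces $H_\alpha(z)=\dfrac{G(z)}{1-\overline\alpha z}$, which is genuinely analytic on $\mathbb{D}$ because the pole $z=1/\overline\alpha$ lies outside $\mathbb{D}$; in particular the boundary-value notation $\bigl\|G(e^{i\cdot})/(1-\overline\alpha e^{i\cdot})\bigr\|_{Y_\gamma}$ in the statement is, by definition of the $Y_\gamma$ norm, the weighted $\ell^2$ norm of the Taylor coefficients of exactly this function, i.e.\ $\|H_\alpha\|_{Y_\gamma}$. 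Applying the operator $\phi_\alpha$ then gives $\phi_\alpha(F)(z)=(1-\overline\alpha z)H_\alpha(z)=G(z)$ on $\mathbb{D}$, so $\phi_\alpha(F)=G$.

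With these identifications in hand, the second step is simply to quote Proposition~\ref{thm:reflectroot}: its hypotheses ($F\in X_\gamma$ and $F(\alpha)=0$) hold, so part~(1) yields $G=\phi_\alpha(F)\in X_\gamma$ and $H_\alpha\in Y_\gamma$, and part~(2) gives
\[\|G\|_{X_\gamma}^2=\|\phi_\alpha(F)\|_{X_\gamma}^2=\|F\|_{X_\gamma}^2-(1-|\alpha|^2)\|H_\alpha\|_{Y_\gamma}^2.\]
Substituting $H_\alpha(\cdot)=G(\cdot)/(1-\overline\alpha\,\cdot)$ on the right-hand side produces the claimed identity. The hypotheses that $\gamma_n$ is monotone increasing and $\gamma_0=0$ are already part of the definition of $X_\gamma$ and are used only through Proposition~\ref{thm:reflectroot}.

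There is essentially no genuine obstacle here; the only points deserving a line of care are (i) that $\phi_\alpha(F)$ equals $G$ exactly, rather than merely up to a unimodular constant — this is true with the sign convention $\prod_j\frac{z-\alpha_j}{1-\overline{\alpha_j}z}$ fixed earlier in Section~\ref{Part1}, and in any case such a constant would not affect any of the weighted norms — and (ii) that the boundary-value notation in the statement is read as the Taylor series of the analytic function $G(z)/(1-\overline\alpha z)$, so that the $Y_\gamma$ norm there coincides with $\|H_\alpha\|_{Y_\gamma}$. Both are immediate. (A self-contained alternative would be to re-run the finite-difference computation from the proof of Proposition~\ref{thm:reflectroot} with $a_j$ taken to be the Taylor coefficients of $G/(1-\overline\alpha\,\cdot)$, but invoking the Proposition is cleaner.)
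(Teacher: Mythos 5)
Your proposal is correct and follows essentially the same route as the paper: write $F(z)=\frac{z-\alpha}{1-\overline{\alpha}z}G(z)$, identify $\phi_\alpha(F)=G$ and $H_\alpha(z)=\frac{G(z)}{1-\overline{\alpha}z}$, and apply Proposition~\ref{thm:reflectroot}. The extra remarks on the unimodular constant and the boundary-value notation are fine but not needed beyond what the paper's own two-line argument already contains.
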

\begin{proof}
We begin by noting that for all $z\in \mathbb{D}$,
\[F(z)=\frac{z-\alpha}{1-\overline{\alpha}z} G(z).\]
Therefore, we know that $\phi_\alpha(F)=G$, and $H_\alpha(z)= \frac{G(z)}{1-\overline{\alpha}z}$. Therefore, by applying Proposition~\ref{thm:reflectroot}, we have the result.
\end{proof}

With this result proved, we have completed this part of the section, and move on to functions with finitely many roots in $\mathbb{D}$.

\subsection{Part 2: Performing a Finite Number of Reflections}\label{Part2}
In the previous section we identified the relationship between $F$ and $G$, and studied how the $X_\gamma$ norm is affected by reflecting a single root across $\partial \mathbb{D}$ with Proposition \ref{thm:reflectroot}. Unfortunately, for functions with multiple zeros in $\mathbb{D}$, a single reflection will not produce the function $G$. Further, after we have performed a reflection, if we reflect a second root, we will be acting upon a new function. Therefore, to utilize the full potential of Proposition \ref{thm:reflectroot}, we require the following definition.
\begin{definition}\label{def:Fj}Let $F\in X_\gamma$ have $m$ roots in $\mathbb{D}$, enumerated $\alpha_1, \alpha_2 \dots \alpha_m$, in increasing order of magnitude.  Then expressing 
\[F_0(z):=F(z)=  \left(\prod_{j=1}^m \frac{z-\alpha_j}{1-\overline{\alpha_j}z}\right) \cdot G(z),\]
where $G$ has no zeros in $\mathbb{D}$, we define
\begin{eqnarray*}
F_{k}(z)&:=&  \left(\prod_{j=k+1}^m \frac{z-\alpha_j}{1-\overline{\alpha_j}z}\right) \cdot G(z) \quad \text{ where }1\leq k<m \\
F_{m}(z) &:=&G(z). \\
\end{eqnarray*}

\end{definition}

With this definition and using the same notation as Definition \ref{def:Halphaphi}
, we notice that for each $1\leq k \leq m,$
 \begin{equation}
\phi_{\alpha_{k}}(F_{k-1}) = F_{k}, \quad H_{\alpha_k}(z):=\frac{1}{1-\overline{\alpha_k}z} F_k(z).
\end{equation}

With all of this, we can prove a simple, yet useful lemma.
\begin{lemma}\label{thm:nreflections}
Suppose that $F$ has $m$ roots in $\mathbb{D}$, labeled in increasing order of magnitude $\alpha_1,\dots \alpha_m$. Then for any $1\leq n \leq m$, we have the identity
\begin{equation}
\|F_n\|_{X_\gamma}^2  = \|F\|_{X_\gamma}^2 - \sum_{j=1}^n \left( (1-|\alpha_j|^2) \left \| H_{\alpha_j}\right \|_{Y_\gamma}^2 \right).\label{eq:mainidentity}
\end{equation}
\end{lemma}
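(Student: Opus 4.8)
The plan is to prove \eqref{eq:mainidentity} by induction on $n$, using Proposition~\ref{thm:reflectroot} as the single-step engine. The base case $n=1$ is essentially Proposition~\ref{thm:reflectroot} applied to $F=F_0$ at its smallest root $\alpha_1$: by the remarks following Definition~\ref{def:Fj} we have $\phi_{\alpha_1}(F_0)=F_1$ and $H_{\alpha_1}(z)=F_1(z)/(1-\overline{\alpha_1}z)$, so part (2) of the Proposition gives $\|F_1\|_{X_\gamma}^2=\|F\|_{X_\gamma}^2-(1-|\alpha_1|^2)\|H_{\alpha_1}\|_{Y_\gamma}^2$, which is \eqref{eq:mainidentity} with $n=1$.

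For the inductive step, I would assume \eqref{eq:mainidentity} holds for some $n$ with $1\le n<m$ and apply Proposition~\ref{thm:reflectroot} to the function $F_n$, which by Part~1 (and part (1) of the Proposition, invoked at each previous stage) lies in $X_\gamma$. Since $F_n(z)=\left(\prod_{j=n+1}^m\frac{z-\alpha_j}{1-\overline{\alpha_j}z}\right)G(z)$, it has a root at $\alpha_{n+1}\in\mathbb{D}$, and in the notation of Definition~\ref{def:Halphaphi} the associated quotient function is exactly $H_{\alpha_{n+1}}(z)=F_{n+1}(z)/(1-\overline{\alpha_{n+1}}z)$ with $\phi_{\alpha_{n+1}}(F_n)=F_{n+1}$. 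Proposition~\ref{thm:reflectroot}(2) then yields
\[
\|F_{n+1}\|_{X_\gamma}^2=\|F_n\|_{X_\gamma}^2-(1-|\alpha_{n+1}|^2)\|H_{\alpha_{n+1}}\|_{Y_\gamma}^2,
\]
and substituting the inductive hypothesis for $\|F_n\|_{X_\gamma}^2$ gives \eqref{eq:mainidentity} with $n$ replaced by $n+1$, completing the induction up to $n=m$.

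The only genuine subtlety — and the step I would be most careful about — is making sure that Proposition~\ref{thm:reflectroot} is actually applicable at each stage, i.e. that $F_n\in X_\gamma$ so that the Proposition's hypotheses are met and $H_{\alpha_{n+1}}\in Y_\gamma$. This is handled by carrying the conclusion ``$\phi_\alpha(F)\in X_\gamma$'' of part~(1) of the Proposition through the induction alongside the norm identity: if $F_n\in X_\gamma$ and has a root at $\alpha_{n+1}$, then $F_{n+1}=\phi_{\alpha_{n+1}}(F_n)\in X_\gamma$ as well, so the hypothesis is automatically reproduced for the next step. One should also note that $\alpha_{n+1}$ is genuinely a root of $F_n$ (even accounting for repeated roots, since the $\alpha_j$ are listed with multiplicity and in increasing order of magnitude, the factor $\frac{z-\alpha_{n+1}}{1-\overline{\alpha_{n+1}}z}$ is still present in $F_n$), so that $H_{\alpha_{n+1}}$ is well-defined with a removable singularity at $\alpha_{n+1}$. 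Everything else is bookkeeping: the telescoping of the sum $\sum_{j=1}^n+\text{(one more term)}=\sum_{j=1}^{n+1}$ is immediate.
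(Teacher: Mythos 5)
Your proof is correct and follows essentially the same route as the paper: iterated application of Proposition~\ref{thm:reflectroot} to the partial decompositions $F_k$, with the resulting one-step identities telescoped (the paper writes the chain of substitutions explicitly rather than as a formal induction, and your extra care that $F_n\in X_\gamma$ is propagated by part~(1) of the Proposition is a welcome, if implicit in the paper, detail).
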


\begin{proof}
We begin by observing that for $0\le k \le n-1$, we have $\phi_{\alpha_{k+1}}(F_k) = F_{k+1}$. Then by applying Proposition~\ref{thm:reflectroot} to $F_k$, for $0\leq k \leq m-1,$  we have the identity 
\begin{equation}\label{eq:firstreflect}
\|F_{k+1}\|_{X_\gamma}^2 = \|F_{k}\|_{X_\gamma}^2 - (1-|\alpha_k|^2)\|H_{\alpha_k}\|_{Y_\gamma}^2.
\end{equation}
 
Now, by applying Equation~\eqref{eq:firstreflect} to each $F_k$, we have
\begin{align}
\|F_{1}\|_{X_\gamma}^2 &= \|F\|_{X_\gamma}^2 - (1-|\alpha_1|^2)\|H_{\alpha_1}\|_{Y_\gamma}^2\\
\|F_2\|_{X_\gamma}^2 &=  \left(\|F\|_{X_\gamma}^2 - (1-|\alpha_1|^2)\|H_{\alpha_1}\|_{Y_\gamma}^2\right) - (1-|\alpha_2|^2)\|H_{\alpha_2}\|_{Y_\gamma}^2 \\
&\vdots\nonumber  \\
\|F_{n-1}\|_{X_\gamma}^2  &= \|F\|_{X_\gamma}^2 - \sum_{j=1}^{n-1} \left( (1-|\alpha_j|^2) \left \| H_{\alpha_j}\right \|_{Y_\gamma}^2 \right)\\
\|F_n\|_{X_\gamma}^2  &= \|F\|_{X_\gamma}^2 - \sum_{j=1}^n \left( (1-|\alpha_j|^2) \left \| H_{\alpha_j}\right \|_{Y_\gamma}^2 \right).
\end{align}
\end{proof}
As a direct consequence of this Lemma, we now have an identity for any function $F\in X_\gamma$ with $m$ many roots in $\mathbb{D}$:
\begin{equation}
\|G\|_{X_\gamma}^2  = \|F\|_{X_\gamma}^2 - \sum_{j=1}^m \left( (1-|\alpha_j|^2) \left \| H_{\alpha_j}\right \|_{Y_\gamma}^2 \right). \label{eq:mainidentity}
\end{equation}

With this identity established, we now look to restrict our choices of $\gamma_n$ to create a bound on the terms $ \|H_{\alpha_j}\|_{Y_\gamma}^2$ involving the functions $F$ and $G$. 

We begin by noticing that the $Y_\gamma$ semi-norm will carry the same properties as the $X_\gamma$ norm if the sequence
\[\Gamma_n:= \gamma_{n+1}-\gamma_n \]
is monotone increasing. In other words, we can treat $Y_\gamma $ as $X_{\Gamma}$. This means that any inequalities that can be applied to the $X_\gamma$ norm can be applied to the $Y_\gamma$ norm. Namely, for any function $F\in Y_\gamma$, we can apply the results of Proposition~\ref{thm:reflectroot} to see
\begin{equation}\label{eq:Ygammaineq1}
\|F\|_{Y_\gamma}^2-\|\phi_{\alpha}(F)\|_{Y_\gamma}^2 \geq 0 .
\end{equation}

If the bounded sequence $\Gamma_n$ is monotone decreasing, then $\Gamma_0-\Gamma_n$ will be monotone increasing and nonnegative. This tells us that 
\[\|F\|_{X_{\Gamma_0-\Gamma_n}}^2-\|\phi_{\alpha}(F)\|_{X_{\Gamma_0-\Gamma_n}}^2 \geq 0.  \]
By the structure of the $X_\gamma$ norm space, and the fact that $\|F\|_{X_{\Gamma_0}}^2 = \|\phi_\alpha(F)\|_{X_{\Gamma_0}}^2$, we directly obtain the inequality
\begin{equation}\label{eq:Ygammaineq2}
\|\phi_{\alpha}(F)\|_{Y_\gamma}^2 -\|F\|_{Y_\gamma}^2 \geq 0.
\end{equation}

These inequalities help us to prove the following lemma.

\begin{lemma} \label{thm:Ygammabound}
Let $\gamma_n$ be a monotone increasing sequence satisfying $\gamma_0=0$, and let $F\in X_\gamma$ have $m$ roots in $\mathbb{D}$ labeled in increasing order of magnitude, $\alpha_j,$ where $m$ can be finite or infinity.  
\begin{enumerate}
\item{If $\gamma_n$ satisfies \eqref{eq:increase}, then for any $1<k< m$,
\[\left\|\frac{G(e^{i\cdot})}{1-\overline{\alpha_k}e^{i\cdot}}\right\|_{Y_\gamma}^2 \le \|H_{\alpha_k}\|_{Y_\gamma}^2 \le \left\|\frac{F(e^{i\cdot})}{e^{i\cdot}-\alpha_k}\right\|_{Y_\gamma}^2.   \]}

\item{If $\gamma_n$ satisfies \eqref{eq:decrease}, then  for any $ 1<k< m$,
\[ \left\|\frac{F(e^{i\cdot})}{e^{i\cdot}-\alpha_k}\right\|_{Y_\gamma}^2 \le \|H_{\alpha_k}\|_{Y_\gamma}^2 \le \left\|\frac{G(e^{i\cdot})}{1-\overline{\alpha_k}e^{i\cdot}}\right\|_{Y_\gamma}^2       .  \]}
\end{enumerate}
\end{lemma}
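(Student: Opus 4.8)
The key observation is the chain of intermediate functions $F_0, F_1, \dots, F_m$ from Definition \ref{def:Fj}. For a fixed index $k$ with $1 < k < m$, I want to compare $\|H_{\alpha_k}\|_{Y_\gamma}^2$ with the $Y_\gamma$-norms of $G(e^{i\cdot})/(1-\overline{\alpha_k}e^{i\cdot})$ and $F(e^{i\cdot})/(e^{i\cdot}-\alpha_k)$. Recall that $H_{\alpha_k}(z) = F_k(z)/(1-\overline{\alpha_k}z)$, where $F_k = (\prod_{j=k+1}^m \frac{z-\alpha_j}{1-\overline{\alpha_j}z})\, G(z)$. The two extreme quantities in the inequalities correspond to the two ends of the reflection process: dividing $G$ (all roots already reflected, $F_m = G$) by $1-\overline{\alpha_k}z$, versus dividing $F$ (no roots reflected, $F_0 = F$) by $z - \alpha_k$. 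So the plan is to interpolate: move from $F_k$ to $G$ by reflecting the remaining roots $\alpha_{k+1},\dots,\alpha_m$, and move from $F_k$ to $F$ by ``un-reflecting'' the earlier roots $\alpha_1, \dots, \alpha_k$ — though actually it is cleaner to observe $F(e^{i\cdot})/(e^{i\cdot}-\alpha_k)$ differs from $H_{\alpha_k}$ only by the \emph{unimodular} Blaschke factors $\prod_{j=1}^{k} \frac{e^{i\cdot}-\alpha_j}{1-\overline{\alpha_j}e^{i\cdot}}$ up to the one reflection at $\alpha_k$; on $\partial\mathbb{D}$ these factors have modulus one, so the subtlety is entirely about how multiplying by $\frac{z-\alpha_j}{1-\overline{\alpha_j}z}$ changes a $Y_\gamma$-norm, which is precisely governed by \eqref{eq:Ygammaineq1} and \eqref{eq:Ygammaineq2}.

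**Key steps.** First I would record that multiplying a function by $\frac{z-\alpha}{1-\overline{\alpha}z}$ and multiplying by $1-\overline{\alpha}z$ then dividing by $z-\alpha$ are related through the operator $\phi_\alpha$: if $\Phi(z) = \frac{z-\alpha}{1-\overline{\alpha}z}\Psi(z)$ then $\Psi = \phi_\alpha(\Phi)/(1-\overline\alpha z) \cdot$ (something); more directly, the right object to track is: passing from $F_{j}/(1-\overline{\alpha_k}z)$ to $F_{j-1}/(1-\overline{\alpha_k}z)$ for $j \ne k$ amounts to applying $\phi_{\alpha_j}^{-1}$, i.e. replacing a reflected factor by an un-reflected one. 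Under hypothesis \eqref{eq:increase}, $Y_\gamma = X_\Gamma$ with $\Gamma_n$ monotone increasing, so Proposition \ref{thm:reflectroot} applies to the $Y_\gamma$-norm directly: reflecting a root \emph{decreases} the $Y_\gamma$-norm, giving inequality \eqref{eq:Ygammaineq1}. Under \eqref{eq:decrease}, the argument around \eqref{eq:Ygammaineq2} shows reflecting a root \emph{increases} the $Y_\gamma$-norm. Then I would chain these monotonicities over the $m-1$ reflections: starting from $H_{\alpha_k} = F_k/(1-\overline{\alpha_k}z)$, reflecting $\alpha_{k+1},\dots,\alpha_m$ turns the numerator $F_k$ into $G$ (this is $m-k$ reflections), and un-reflecting $\alpha_1,\dots,\alpha_{k-1}$ turns $F_k$ back toward $F$ — more precisely I need $F/(z-\alpha_k) = \big(\prod_{j=1}^{k-1}\frac{z-\alpha_j}{1-\overline{\alpha_j}z}\big)\cdot \frac{z-\alpha_k}{z-\alpha_k}\cdot H_{\alpha_k}\cdot$(correction), so I should carefully bookkeep that $F/(z-\alpha_k)$ equals $H_{\alpha_k}$ times $\prod_{j=1}^{k-1}\frac{z-\alpha_j}{1-\overline{\alpha_j}z}$, each factor of which is an un-reflected root relative to $H_{\alpha_k}$. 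Applying \eqref{eq:Ygammaineq1}/\eqref{eq:Ygammaineq2} at each of these $k-1$ steps and at each of the $m-k$ steps on the other side yields exactly the two-sided bound in each case.

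**Main obstacle.** The delicate point is the bookkeeping of \emph{which} function each $\phi_{\alpha_j}$ or its inverse is being applied to, and verifying that at every intermediate stage the function still lies in the relevant space so that Proposition \ref{thm:reflectroot} (and hence \eqref{eq:Ygammaineq1}, \eqref{eq:Ygammaineq2}) is legitimately applicable — in particular that each intermediate function has the requisite root to apply $\phi_{\alpha_j}$, and that dividing by $1-\overline{\alpha_k}z$ (a factor with no zero in $\overline{\mathbb{D}}$) does not disturb membership in $Y_\gamma$. One must also be careful that $\phi_{\alpha_j}$ genuinely lowers the $Y_\gamma$-norm rather than merely the $X_\gamma$-norm; this is exactly the content of the discussion preceding the lemma, where $Y_\gamma$ is reinterpreted as $X_\Gamma$ (respectively $X_{\Gamma_0} - X_{\Gamma_0 - \Gamma_n}$), so I would invoke that reinterpretation explicitly and then quote Proposition \ref{thm:reflectroot} verbatim. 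Once the monotonicity direction is pinned down in each of the two cases \eqref{eq:increase} and \eqref{eq:decrease}, the telescoping is routine; the restriction $1 < k < m$ is exactly what guarantees there is at least one reflection available on each side, so the chained inequalities are nontrivial. I would handle the two items of the lemma in parallel, since the proofs are identical except for the sign of the monotonicity supplied by \eqref{eq:Ygammaineq1} versus \eqref{eq:Ygammaineq2}.
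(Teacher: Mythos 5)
Your proposal takes essentially the same route as the paper: you identify $\frac{F(z)}{z-\alpha_k}$ and $\frac{G(z)}{1-\overline{\alpha_k}z}$ as the two endpoints of the chain of single-root reflections passing through $H_{\alpha_k}$ (the first $k-1$ roots on one side, the roots $\alpha_{k+1},\dots$ on the other), and apply at each step the $Y_\gamma$-monotonicity of a reflection obtained by viewing $Y_\gamma$ as $X_\Gamma$, i.e.\ \eqref{eq:Ygammaineq1} under \eqref{eq:increase} and \eqref{eq:Ygammaineq2} under \eqref{eq:decrease}, exactly as in the paper's proof. The only point the paper addresses that you leave implicit is the case $m=\infty$, where the finite telescoping must be replaced by a limiting argument using the monotonicity of the resulting sequence of $Y_\gamma$ norms.
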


\begin{proof}
To begin, let
\[F(z) = \prod_{j=0}^ m \frac{\alpha_j-z}{1-\overline{\alpha_j}z} G(z).\]
We know that for any $1<k<m$, we can express
\[H_{\alpha_k} = \left(\prod_{j=k+1}^m  \frac{\alpha_j-z}{1-\overline{\alpha_j}z} \right)\cdot \left(\frac{1}{1-\overline{\alpha_k}z} G(z)\right).\]
Similarly, by rearranging, we know that
\[\frac{F(z)}{\alpha_k-z}= \left(\prod_{\substack{j=0\\j\neq k}}^ m \frac{\alpha_j-z}{1-\overline{\alpha_j}z}\right) \cdot \frac{1}{1-\overline{\alpha_k}z}G(z).\]
Thus, by reflecting the first $k-1$ roots of $F$ across the unit circle, we get $H_{\alpha_k}$. By reflecting the remaining roots, we get $\frac{1}{1-\overline{\alpha_k}\cdot}G(\cdot)$.

If $\Gamma_n$ is monotone increasing, then by \eqref{eq:Ygammaineq1}, we know that 
\[ \left\|\frac{F(z)}{\alpha_k-z}\right\|_{Y_\gamma}^2 \geq \left\|\phi_{\alpha_1}\left(\frac{F(z)}{\alpha_k-z}\right)\right\|_{Y_\gamma}^2 \geq \dots \geq \|H_{\alpha_k}\|_{Y_\gamma}^2 \geq \|\phi_{\alpha_{k+1}}(H_{\alpha_k})\|_{Y_\gamma}^2 \geq \dots.\]
Clearly, if $m$ is finite the result holds. If $m=\infty$, by the monotonicity of the sequence of $Y_\gamma$ norms, we know that this implies that
\[\|H_{\alpha_k}\|_{Y_\gamma}^2 \geq \left\|\frac{G(e^{i\cdot})}{1-\overline{\alpha_k}e^{i\cdot}}\right\|_{Y_\gamma}^2, \]
which proves the first part of the inequality.

Proving the second inequality follows verbatim with all inequalities flipped, due to  \eqref{eq:Ygammaineq2}. This gives the result.
\end{proof}
With this lemma, we can see that by restricting $\gamma_n$ to be a sequence that is either increasing at a non-increasing or non-decreasing rate, we can replace the intermediary $H_{\alpha_j}$ terms in Equation~\eqref{eq:mainidentity}. With all of this, we are now able to prove Theorem \ref{thm:MR1} and Theorem \ref{thm:MR2}. 

\subsubsection{Proofs of Theorem \ref{thm:MR1},  Corollary \ref{thm:MR1Cor1}, Corollary  \ref{thm:MR1Cor2}, and Theorem \ref{thm:MR2}, }
With the previous results, we now prove Theorem \ref{thm:MR1}, Theorem \ref{thm:MR2}, and Corollary 2. 

We begin with the proof of Theorem \ref{thm:MR1} . As a reminder, the hypotheses of this theorem are that the function $F\in X_\gamma$ has a finite number of roots in $\mathbb{D}$ and the sequence $\Gamma_n$ is monotone increasing. We have to show that Equations \eqref{eq:thm1eq1} and \eqref{eq:thm1eq2} hold true.

\begin{proof} 
 Since $F$ has a finite number of roots in $\mathbb{D}$, by Lemma~\ref{thm:nreflections} we have the identity
 \[\|G\|_{X_\gamma}^2  = \|F\|_{X_\gamma}^2 - \sum_{j=1}^n \left( (1-|\alpha_j|^2) \left \| H_{\alpha_j}\right \|_{Y_\gamma}^2 \right). \]
 
 If $\Gamma_n$ is a monotone increasing sequence, then by Lemma \ref{thm:Ygammabound} we know that for all $1\leq j \leq m$, 
 \[ \left \| H_{\alpha_j}\right \|_{Y_\gamma}^2 \geq \left \| \frac{G(e^{i \cdot})}{1-\overline{\alpha_j}e^{i \cdot}}\right \|_{Y_\gamma}^2. \]
 Therefore, by replacing each $H_{\alpha_j}$ with $\frac{G(e^{i \cdot})}{1-\overline{\alpha_j}e^{i \cdot}}$, we preserve the inequality and have the result.
 
\end{proof}

\medskip

From here, we prove Corollary \ref{thm:MR1Cor1}.

\begin{proof}
Let $\Gamma_n \equiv C$. Then we know that both \eqref{eq:increase} and \eqref{eq:decrease} hold true. Therefore, by Lemma \ref{thm:Ygammabound}, for all $1\le j \le m$
 \[  \left\| \frac{F(e^{i \cdot})}{e^{i \cdot}-\alpha_j}\right \|_{Y_\gamma}^2= \| H_{\alpha_j} \|_{Y_\gamma}^2 =\left \| \frac{G(e^{i \cdot})}{1-\overline{\alpha_j}e^{i \cdot}}\right \|_{Y_\gamma}^2. \]
 Therefore, by replacing each term in \eqref{eq:mainidentity}, we have the result.
\end{proof}
 With this proved, we now prove Corollary \ref{thm:MR1Cor2}. 
   \medskip
   
   \begin{proof}
   By definition, we know that given $F\in W^{1,2}$, we can represent
   \[\|F\|_{W^{1,2}}^2 = \sum_{j=0}^\infty (1+j^2) |a_j|^2= \sum_{j=0}^\infty j^2 |a_j|^2  + \sum_{j=0}^\infty |a_j|^2.\]
   Letting $\gamma_n = n^2$, this gives us the identity
   \begin{equation}
   \|\cdot\|_{W^{1,2}}^2 = \|\cdot\|_{X_\gamma}^2 + \|\cdot\|_{\mathcal{H}^2}^2. \label{eq:W12}
   \end{equation}
   
   Since $\gamma_{n+1}-\gamma_n = 2n+1= 2(n+1)-1$, we have the identity
   \begin{equation}
    \|\cdot \|_{Y_\gamma}^2 = 2\|\cdot\|_{\mathcal{D}}^2 - \|\cdot\|_{\mathcal{H}^2}^2
    \end{equation}
      Therefore, by invoking Theorem 1 on $\gamma_n$, we have
   
   \begin{equation}
   \|G\|_{X_\gamma}^2 \leq \|F\|_{X_\gamma}^2 - \sum_{j=1}^m (1-|\alpha_j|^2) \left[2\left \| \frac{G(e^{i\cdot})}{1-\overline{\alpha_j}e^{i\cdot}}\right\|^2_{\mathcal{D}}  - \left \| \frac{G(e^{i\cdot})}{1-\overline{\alpha_j}e^{i\cdot}}\right\|^2_{\mathcal{H}^2}\right].
  \end{equation}
   Finally, by the fact that $\|G\|_{\mathcal{H}^2}^2 = \|F\|_{\mathcal{H}^2}^2 $, and \eqref{eq:W12}, we have the result.
   \end{proof}
   
In studying the proof of this Corollary, we can see that similar techniques can also be used to create bounds for the spaces $W^{s,2}$ where $s\in \mathbb{N}$. By expanding the terms $(1+j^2)^s$, and applying the same techniques, we can obtain similar results.

\medskip

We now move on to Theorem \ref{thm:MR2}. The hypotheses of this theorem are similar to the previous theorem, with the exception that the sequence $\Gamma_n$ is monotone decreasing. With these conditions, we show that Equation \eqref{eq:thm2eq1} holds.

\begin{proof}
Since $F$ has a finite number of roots in $\mathbb{D}$, by Lemma~\ref{thm:nreflections} we have the identity
 \[\|G\|_{X_\gamma}^2  = \|F\|_{X_\gamma}^2 - \sum_{j=1}^n \left( (1-|\alpha_j|^2) \left \| H_{\alpha_j}\right \|_{Y_\gamma}^2 \right). \]
 
 If $\Gamma_n$ is a monotone decreasing sequence, then by Lemma \ref{thm:Ygammabound} we know that for all $1\leq j \leq m$, 
 \[ \left \| H_{\alpha_j}\right \|_{Y_\gamma}^2 \geq \left \| \frac{F(e^{i \cdot})}{e^{i \cdot}-\alpha_j}\right \|_{Y_\gamma}^2. \]
 Therefore, by replacing each $H_{\alpha_j}$ with $\frac{F(e^{i \cdot})}{e^{i \cdot}-\alpha_j}$, we preserve the inequality and have the result.
\end{proof}

With all results proved in the case when the function $F$ has finite number of roots in $\mathbb{D}$, we now consider the case when $F$ has an infinite number of roots in $\mathbb{D}$.

\subsection{Part 3: Functions with Infinitely Many Roots in $\mathbb{D}$}\label{Part3}
Similar to the previous section, this section will study the relationship between the $X_\gamma$ norm of $F$ and $G$, with the exception that $F$ will now be assumed to have infinitely many zeros in $\mathbb{D}$. In this case, we rely on some well known literature about the convergence of the partial decompositions.

We know that $F$ has Blaschke decomposition $F=B\cdot G$, where $B$ is an infinite Blaschke Product, and $G\in \mathcal{H}^2$. Therefore, if we enumerate the roots of $F$ as $\alpha_1,\alpha_2,\dots$ and ignore the rotation term $e^{i \theta_0},$ for $\theta_0\in[0,2\pi)$, we can express
\begin{equation}
F(z) = \prod_{j=1}^\infty \frac{\alpha_j-z}{1-\overline{\alpha_j}z} G(z).
\end{equation}
Similarly to what was done in the last section, we want to investigate the relationship between the $X_\gamma$ norms of $F$ and $G$. As a fact that is proven in \cite{Ricci:2004}
, if we define $F_j$ as the partial decomposition of $F$ defined in Definition~\ref{def:Fj}, we know that $F_j \to G$ uniformly on compact subsets of $\mathbb{D}$. Further, by Proposition \ref{thm:nreflections}, we know that for any finite $m$, we have the identity
\begin{equation*}
\|F_m\|_{X_\gamma}^2 = \|F\|_{X_\gamma}^2 - \sum_{j=1}^m (1-|\alpha_j|^2)\|H_{\alpha_j}\|_{Y_\gamma}^2.
\end{equation*}
Therefore, by showing that 
\begin{align}
 \lim_{m\to \infty} \|F_m\|_{X_\gamma}^2 = \|G\|_{X_\gamma}^2 \quad \text{and } \label{eq:Xconvergence}\\
\sup_j \|H_{\alpha_j}\|_{Y_\gamma}^2<\infty, \label{eq:boundedsup}
  \end{align}
we will have a meaningful analogue to Equation~\eqref{eq:mainidentity} for functions with infinitely many zeros in $\mathbb{D}$, and can prove Theorem \ref{thm:MR3}.

We begin by proving a sufficient condition on $\gamma_n$ to ensure Equation \eqref{eq:Xconvergence} holds. 

\begin{lemma}\label{thm:FmtoG}
Suppose that $\gamma_n \nearrow M < \infty.$ Then 
\[ \lim_{m\to \infty} \|F_m\|_{X_\gamma}^2 = \|G\|_{X_\gamma}^2. \]
\end{lemma}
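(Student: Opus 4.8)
The goal is to upgrade the pointwise (uniform-on-compacta) convergence $F_m \to G$ to convergence in the $X_\gamma$ norm, using only that $\gamma_n \nearrow M < \infty$. The natural approach is a dominated-convergence argument on the sequence space $\ell^2(\gamma)$. Write $F_m(z) = \sum_{n\ge 0} b_n^{(m)} z^n$ and $G(z) = \sum_{n \ge 0} b_n z^n$. First I would record two facts: (i) since $F_m \to G$ uniformly on compact subsets of $\mathbb{D}$, each Taylor coefficient converges, $b_n^{(m)} \to b_n$ as $m \to \infty$, for every fixed $n$ (coefficients are given by Cauchy integrals over a fixed circle $|z| = r < 1$); and (ii) each $F_m = B_m \cdot G$ where $B_m = \prod_{j>m}\frac{\alpha_j - z}{1-\overline{\alpha_j} z}$ is a Blaschke product with $|B_m| \le 1$ on $\mathbb{D}$, so $\|F_m\|_{\mathcal{H}^2} \le \|G\|_{\mathcal{H}^2}$, i.e. $\sum_n |b_n^{(m)}|^2 \le \sum_n |b_n|^2$ uniformly in $m$.

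\textbf{Main step.} Now I would combine these with $\gamma_n \le M$. Fix $\varepsilon > 0$. Since $\sum_n \gamma_n |b_n|^2 = \|G\|_{X_\gamma}^2 < \infty$ and $\gamma_n \le M$, choose $N$ so that $\sum_{n > N} \gamma_n |b_n|^2 < \varepsilon$; moreover, because $\gamma_n \le M$, the tail $\sum_{n>N} \gamma_n |b_n^{(m)}|^2 \le M \sum_{n>N} |b_n^{(m)}|^2$, and this can be controlled uniformly in $m$: indeed $\sum_{n > N} |b_n^{(m)}|^2 = \|F_m\|_{\mathcal H^2}^2 - \sum_{n \le N}|b_n^{(m)}|^2 \to \|G\|_{\mathcal H^2}^2 - \sum_{n\le N}|b_n|^2 = \sum_{n>N}|b_n|^2$ as $m \to \infty$ (using coefficientwise convergence on the finite sum and $\|F_m\|_{\mathcal H^2}^2 \to \|G\|_{\mathcal H^2}^2$, which itself follows from $B_m \to 1$ locally uniformly together with $\|F_m\|_{\mathcal H^2} \le \|G\|_{\mathcal H^2}$ and Fatou — or more simply from the identity in Lemma~\ref{thm:nreflections} with $\gamma_n \equiv 1$, giving $\|F_m\|_{\mathcal H^2}^2 = \|F\|_{\mathcal H^2}^2 - \sum_{j\le m}(1-|\alpha_j|^2)\|H_{\alpha_j}\|^2_{\mathcal H^2(\text{shift})}$, a monotone sequence bounded below). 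Hence for $m$ large the uniform tail bound $\sum_{n>N}\gamma_n |b_n^{(m)}|^2 \le M\big(\sum_{n>N}|b_n|^2 + \varepsilon\big)$ holds. Then split
\[
\big|\,\|F_m\|_{X_\gamma}^2 - \|G\|_{X_\gamma}^2\,\big| \le \sum_{n \le N} \gamma_n\big|\,|b_n^{(m)}|^2 - |b_n|^2\,\big| + \sum_{n>N}\gamma_n|b_n^{(m)}|^2 + \sum_{n>N}\gamma_n|b_n|^2,
\]
where the first term $\to 0$ as $m\to\infty$ (finite sum, coefficientwise convergence) and the last two are $O(\varepsilon)$ uniformly in large $m$. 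Letting $m\to\infty$ and then $\varepsilon \to 0$ gives the claim.

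\textbf{Expected obstacle.} The only delicate point is the uniform control of the tails $\sum_{n>N}|b_n^{(m)}|^2$; without the boundedness $\gamma_n \le M$ one could not pass from $\mathcal{H}^2$-tail control to $X_\gamma$-tail control, which is exactly why the hypothesis $\gamma_n \nearrow M < \infty$ appears. I would make sure to justify $\|F_m\|_{\mathcal{H}^2}^2 \to \|G\|_{\mathcal{H}^2}^2$ cleanly — the cheapest route is to apply the already-proved Lemma~\ref{thm:nreflections} with the constant weight $\gamma_n \equiv 1$ (so $X_\gamma = \mathcal{H}^2$ and $Y_\gamma$ is the corresponding Bergman-type space), which yields $\|F_m\|_{\mathcal H^2}^2$ as an explicit decreasing sequence bounded below by $0$, hence convergent; identifying its limit as $\|G\|^2_{\mathcal H^2}$ then follows from Fatou applied to the coefficientwise convergence (lower semicontinuity gives $\ge$, and the monotone bound gives $\le$ at each finite truncation). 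Everything else is routine $\ell^2$ bookkeeping.
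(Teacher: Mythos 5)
Your argument is correct, but it runs along a different track than the paper's. The paper's proof deduces $\|F_m-G\|_{\mathcal{H}^2}\to 0$ directly from locally uniform convergence (by interchanging $\sup_{0<r<1}$ with the limit in $m$) and then transfers this to $X_\gamma$ via the domination $\|\cdot\|_{X_\gamma}^2\le M\|\cdot\|_{\mathcal{H}^2}^2$; you instead never prove norm convergence of the difference, but work coefficientwise: Cauchy-integral convergence of each Taylor coefficient, the uniform bound $\|F_m\|_{\mathcal{H}^2}\le\|G\|_{\mathcal{H}^2}$, Fatou for the identification $\|F_m\|_{\mathcal{H}^2}^2\to\|G\|_{\mathcal{H}^2}^2$, and an $\varepsilon$--$N$ tail split in which the boundedness $\gamma_n\le M$ converts the $\mathcal{H}^2$ tail control into $X_\gamma$ tail control. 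This is where the hypothesis enters in both proofs, but your route is arguably the more careful one: the interchange of $\sup_r$ and $\lim_m$ in the paper is exactly the delicate point that your tail/Fatou argument (or, equivalently, weak convergence plus convergence of norms in a Hilbert space) is designed to justify. Two small remarks: since each $B_m$ is itself a Blaschke product, hence inner, you in fact have the exact equality $\|F_m\|_{\mathcal{H}^2}=\|G\|_{\mathcal{H}^2}$ for every $m$, which lets you skip the Fatou step entirely; and your alternative justification via Lemma~\ref{thm:nreflections} with $\gamma_n\equiv 1$ is slightly off as stated, because the paper's weights require $\gamma_0=0$ and a constant weight makes all the $Y_\gamma$ increments vanish (so the identity just returns $\|F_m\|_{\mathcal{H}^2}^2=\|F\|_{\mathcal{H}^2}^2$, not a sum involving a ``shifted'' $\mathcal{H}^2$ norm) --- harmless here, but worth fixing if you keep that aside.
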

\begin{proof}
Since 
\[ \|F_m\|_{\mathcal{H}^2} \to \|G\|_{\mathcal{H}^2},\]
uniformly on compact subsets of $\mathbb{D}$, we know that for any fixed $r<1$, 
\[\lim_{m\to \infty} \int_0^{2\pi} |F_m(re^{i\theta})-G(re^{i\theta})|^2d\theta = 0. \]
Therefore, 
\[\sup_{0<r<1} \lim_{m\to \infty} \int_0^{2\pi} |F_m(re^{i\theta})-G(re^{i\theta})|^2d\theta = 0, \]
so 
\[\lim_{m\to \infty} \|F_m - G\|_{\mathcal{H}^2}^2 = 0.\]
By our assumption, $\gamma_n$ is bounded above by $M$, so we know
\[ \left| \|F_m\|_{X_\gamma}^2 - \|G\|_{X_\gamma}^2 \right| \leq  \|F_m - G\|_{X_\gamma}^2 \leq  M\|F_m - G\|_{\mathcal{H}}^2. \]
Therefore, the result holds.
\end{proof}

With this condition, we now have to restrict our choice of $\gamma_n$ to bounded sequences. Clearly no bounded sequence can increase at an increasing rate, so for the remainder of this section we assume that \eqref{eq:decrease} holds.

From here, we need to find a condition on $\gamma_n$ so that \eqref{eq:boundedsup} is true on the space $Y_\gamma$. We know from Lemma~\ref{thm:Ygammabound} and from our restriction on $\gamma_n$ that for any finite $j$, 
\[\left\|\frac{F(e^{i\cdot})}{e^{i\cdot}-\alpha_j}\right\|_{Y_\gamma}^2 \le \|H_{\alpha_j}\|_{Y_\gamma}^2 \le \left\|\frac{G(e^{i\cdot})}{1-\overline{\alpha_j}e^{i\cdot}}\right\|_{Y_\gamma}^2. \]

Therefore, if we can find a condition on $\gamma_n$ such that 
\[ \sup_j \left\|\frac{G(e^{i\cdot})}{1-\overline{\alpha_j}e^{i\cdot}}\right\|_{Y_\gamma}^2 < \infty,\]
we will also satisfy \eqref{eq:boundedsup}. The following lemma provides such a condition.

\begin{lemma}\label{thm:boundedGsup} Let $\{\gamma_n\} \nearrow M$ be a monotone increasing sequence where $\Gamma_n$ is monotone decreasing. If
\begin{equation}\label{eq:suffcond}
\sum_{j=0}^\infty M-\gamma_j < \infty,
\end{equation}

 then, for every function $F\in \mathcal{H}^2(\mathbb{D})$ with infinitely many roots labeled in increasing order of magnitude $\alpha_1,\alpha_2,\dots $, with Blaschke decomposition $F=B\cdot G$, we have
 \begin{equation}
 \sup_j \left \| \frac{G(z)}{1-\overline{\alpha_j}z} \right \|_{Y_\gamma}^2 < \infty.
 \end{equation}
 \end{lemma}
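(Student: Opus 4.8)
The plan is to reduce the supremum over $j$ of $\|G(z)/(1-\overline{\alpha_j}z)\|_{Y_\gamma}^2$ to an estimate that is uniform in $\alpha_j$ by exploiting only that $|\alpha_j| < 1$. Write $G(z) = \sum_{n\geq 0} b_n z^n$, so that $\sum_n |b_n|^2 = \|G\|_{\mathcal{H}^2}^2 < \infty$. Expanding the geometric series, $G(z)/(1-\overline{\alpha_j}z) = \sum_{k\geq 0} c_k^{(j)} z^k$ where $c_k^{(j)} = \sum_{\ell=0}^k \overline{\alpha_j}^{\,k-\ell} b_\ell$. Then
\[
\left\| \frac{G(z)}{1-\overline{\alpha_j}z}\right\|_{Y_\gamma}^2 = \sum_{k\geq 0} \Gamma_k \, |c_k^{(j)}|^2,
\]
and the goal is to bound $\sum_k \Gamma_k |c_k^{(j)}|^2$ by a constant depending only on $\|G\|_{\mathcal{H}^2}$, $M$, and $\sum_k (M - \gamma_k)$, but not on $j$.

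The key observation is that $\Gamma_k = \gamma_{k+1} - \gamma_k$ and, since $\gamma_k \nearrow M$, we have $\sum_{k\geq 0}\Gamma_k = M$; moreover $\Gamma_k = (M - \gamma_k) - (M - \gamma_{k+1}) \leq M - \gamma_k$, so the hypothesis \eqref{eq:suffcond} gives $\sum_k \Gamma_k \cdot (\text{anything bounded})$ is controlled, but more usefully $\sum_k k\,\Gamma_k = \sum_k \sum_{i > k}\Gamma_i \cdot(\dots)$—actually the cleaner route is the summation-by-parts identity $\sum_{k\geq 0} k\,\Gamma_k = \sum_{k\geq 1}(M - \gamma_k)$, which is finite by \eqref{eq:suffcond}. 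So I would first record that \eqref{eq:suffcond} is exactly the statement $\sum_k k\,\Gamma_k < \infty$. Next, estimate $|c_k^{(j)}|^2 \leq \left(\sum_{\ell=0}^k |\alpha_j|^{k-\ell}|b_\ell|\right)^2 \leq \left(\sum_{\ell=0}^k |b_\ell|\right)^2$ is too lossy; instead apply Cauchy–Schwarz as $|c_k^{(j)}|^2 \leq \left(\sum_{\ell=0}^k |\alpha_j|^{k-\ell}\right)\left(\sum_{\ell=0}^k |\alpha_j|^{k-\ell}|b_\ell|^2\right) \leq \frac{1}{1-|\alpha_j|}\sum_{\ell=0}^k |\alpha_j|^{k-\ell}|b_\ell|^2$, which still carries the bad factor $(1-|\alpha_j|)^{-1}$. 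To kill that factor, I would instead bound crudely $|c_k^{(j)}|^2 \leq (k+1)\sum_{\ell=0}^k |b_\ell|^2 \leq (k+1)\|G\|_{\mathcal{H}^2}^2$, giving
\[
\left\|\frac{G(z)}{1-\overline{\alpha_j}z}\right\|_{Y_\gamma}^2 \leq \|G\|_{\mathcal{H}^2}^2 \sum_{k\geq 0}(k+1)\Gamma_k = \|G\|_{\mathcal{H}^2}^2\left(M + \sum_{k\geq 1}(M-\gamma_k)\right) < \infty,
\]
a bound independent of $j$. This completes the argument.

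The main obstacle—and the step to get right—is the interchange between the $(k+1)$-type weight coming from the crude bound on $|c_k^{(j)}|^2$ and the decay of $\Gamma_k$ encoded by \eqref{eq:suffcond}: one must verify the summation-by-parts identity $\sum_{k\geq 0}(k+1)\Gamma_k = M + \sum_{k\geq 1}(M - \gamma_k)$ (equivalently $\sum_k (k+1)(\gamma_{k+1}-\gamma_k)$, telescoped using $\gamma_k \to M$ and $k(M - \gamma_k) \to 0$, the latter itself following from $\sum_k(M-\gamma_k) < \infty$ together with monotonicity of $M - \gamma_k$). Once that bookkeeping is in place, the uniformity in $j$ is automatic because the only place $\alpha_j$ entered was through $|\alpha_j| \leq 1$, which was discarded. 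Everything else is routine: the power-series expansion of $G/(1-\overline{\alpha_j}z)$ is legitimate since $H_{\alpha_j} = G(z)/(1-\overline{\alpha_j}z)$ is analytic on $\mathbb{D}$ (it is the reflected partial product times $G$), and its $Y_\gamma$ semi-norm is computed coefficientwise by definition of $Y_\gamma$.
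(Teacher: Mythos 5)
Your proof is correct, and it is a genuinely streamlined version of the paper's argument. Both proofs ultimately rest on the same two ingredients: a coefficientwise bound of the form $|c_k^{(j)}|^2 \le (k+1)\|G\|_{\mathcal{H}^2}^2$ for the Taylor coefficients of $G/(1-\overline{\alpha_j}z)$, and the summation-by-parts identity $\sum_{k\ge 0}(k+1)\Gamma_k = \sum_{k\ge 0}(M-\gamma_k) < \infty$, where the needed fact $k(M-\gamma_k)\to 0$ follows, as you note, from summability together with monotonicity of $M-\gamma_k$. The difference is in how the coefficient bound is obtained: the paper splits into the cases $|\alpha_j|<\tfrac12$ and $|\alpha_j|\ge \tfrac12$, handling the first with $\Gamma_k\le\gamma_1$ and Lemma~\ref{thm:Hbound}, and the second by rewriting $G/(1-\overline{\alpha}z)$ as $-\overline{\alpha}^{-1}G/(z-\beta)$ with $\beta=1/\overline{\alpha}$, computing the coefficients via the Leibniz rule, and using Cauchy--Schwarz with the geometric sum $\sum_{k\le n}|\beta|^{2k}/|\beta|^{2n+2}<n+1$; the restriction $|\beta|\le 2$ is what forces the case split there. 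Your direct Cauchy-product expansion of $1/(1-\overline{\alpha_j}z)$ followed by the crude Cauchy--Schwarz estimate discards $\alpha_j$ at the outset, so a single computation is uniform over all $|\alpha_j|<1$ and the case analysis (and the Leibniz/derivative bookkeeping) disappears; the only thing sacrificed is the sharper bound $4\gamma_1\|F\|_{\mathcal{H}^2}^2$ that the paper gets for small $|\alpha_j|$ without invoking \eqref{eq:suffcond}, which is immaterial for the statement being proved. Your justification of the expansion (analyticity of $G/(1-\overline{\alpha_j}z)$ on $\mathbb{D}$ and membership in $\mathcal{H}^2$, with the $Y_\gamma$ quantity computed coefficientwise) is also in order, so there is no gap.
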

 
 \begin{proof}
 We break this proof into two cases, $0\leq |\alpha|<\frac{1}{2}$ and $\frac{1}{2} \leq |\alpha|<1$. In both cases we will provide a bound that is independent of the choice of $\alpha$, giving us the result. 
 
 To begin, let $\alpha$ be a root of $F$ with $|\alpha|<1$. We know by the decomposition theorem that
 \[\left\|\frac{F(z)}{z-\alpha}\right\|_{\mathcal{H}^2} = \left\|\frac{G(z)}{1-\overline{\alpha}z}\right\|_{\mathcal{H}^2},\]
 since
 \[F(z) = \frac{z-\alpha}{1-\overline{\alpha}z}\cdot \tilde B(z)\cdot G(z).\]
 If $0 \leq |\alpha| <\frac{1}{2}$, we know by Lemma \ref{thm:Hbound} that
 \[ \left\|\frac{G(z)}{1-\overline{\alpha}z}\right\|_{Y_\gamma}^2 \leq \gamma_1 \left\|\frac{G(z)}{1-\overline{\alpha}z}\right\|_{\mathcal{H}^2}^2 = \gamma_1\left\|\frac{F(z)}{z-\alpha}\right\|_{\mathcal{H}^2}^2 < 4\gamma_1 \|F(z)\|_{\mathcal{H}^2}^2. \]
 Since this bound is independent of $\alpha$, we are finished with this case.
 
 If $\frac{1}{2} \leq |\alpha| < 1$, then we can rewrite 
 \[\frac{G(z)}{1-\overline{\alpha}z} = \frac{-1}{\overline{\alpha}} \left(\frac{G(z)}{z-\frac{1}{\overline{\alpha}}}\right). \]
 We know by our bound on $\alpha$ that 
 \[ \left\| \frac{-1}{\overline{\alpha}} \left(\frac{G(z)}{z-\frac{1}{\overline{\alpha}}}\right) \right\|_{Y_\gamma}^2 \leq 4 \left\| \frac{G(z)}{z-\frac{1}{\overline{\alpha}}}\right\|_{Y_\gamma}^2. \]
 Therefore, if we bound the term
 \[ \left\| \frac{G(z)}{z-\frac{1}{\overline{\alpha}}}\right\|_{Y_\gamma}^2, \]

we will have completed the proof.

 For simplicity, we will denote $\beta = \frac{1}{\overline{\alpha}}$ where $1< |\beta| \leq 2$. We know that $G\in \mathcal{H}^2$, so we may express $G(z) = \sum_{n=0}^\infty c_n z^n$, for all $z\in \mathbb{D}$. Since $|\beta|>1$, we know that for all $z\in \mathbb{D}$, we can rewrite
 \[ G_\beta(z):=\frac{G(z)}{z-\beta} = \sum_{n=0}^\infty d_n z^n. \]
 
By Cauchy's formula for derivatives, we know that for each $n$,
\[d_n =\frac{G_\beta^{(n)}(0)}{n!}.\]
By the generalized Leibniz rule, we know that for any $|z| < 1$,
\[G_\beta^{(n)}(z) = \frac{d^n}{dz^n} \left[G(z) \cdot \frac{1}{z-\beta}\right] = \sum_{k=0}^n {n \choose k} \frac{d^{n-k}}{dz^{n-k}} [G(z)] \cdot \frac{d^k}{dz^k}\left[\frac{1}{z-\beta}\right].  \]
Since $G^{(n-k)}(0) = c_{n-k}(n-k)!$, and
\[ \frac{d^k}{dz^k}\left[\frac{1}{z-\beta}\right] = (k!)(-1)^k (z-\beta)^{-(k+1)},\]
we know
\[G_\beta^{(n)}(0) = \sum_{k=0}^n \frac{n!}{(n-k)! k!} c_{n-k}(n-k)! \cdot (k!) \frac{1}{\beta^{k+1}} = \frac{n!}{\beta^{n+1}} \sum_{k=0}^n c_k \beta^k .\]

Therefore, for each $0\leq n < \infty$, 
\[d_n = \frac{1}{\beta^{n+1}} \sum_{k=0}^n c_k \beta^k.\]

 From here, we consider the $Y_\gamma$ norm of $\frac{G(z)}{z-\beta}$.
  \[\left\| \frac{G(z)}{z-\beta}\right\|_{Y_\gamma}^2 = \sum_{n=0}^\infty (\gamma_{n+1}-\gamma_n) |d_n|^2 =\sum_{n=0}^\infty \frac{(\gamma_{n+1}-\gamma_n)}{|\beta|^{2n+2}} \left | \sum_{k=0}^n c_k \beta^{k}\right|^2. \]

By the Cauchy-Schwarz inequality we know 
\[\left| \sum_{k=0}^n c_k \beta^{k}\right|^2 \le \sum_{k=0}^n |c_k|^2 \sum_{k=0}^n |\beta^{k}|^2.\]
Therefore
\[\left\| \frac{G(z)}{z-\beta}\right\|_{Y_\gamma}^2 \leq \sum_{n=0}^\infty (\gamma_{n+1}-\gamma_n) \left(\sum_{k=0}^n |c_k|^2\right) \frac{ \sum_{k=0}^n |\beta^{k}|^2}{|\beta|^{2n+2}}. \]
Clearly, $ \sum_{k=0}^n |c_k|^2 \leq \|G\|_{\mathcal{H}^2}^2$. By finite geometric series and the bound $1<|\beta| \leq 2$ , we know
\[ \frac{1}{3}\leq \frac{ \sum_{k=0}^n |\beta^{k}|^2}{|\beta|^{2n+2}} < (n+1).\]
 
 Lastly, since $\gamma_n$ satisfies 
\[\sum_{n=0}^\infty M-\gamma_n< \infty,\]
we know that 
\[\sum_{n=0}^\infty (\gamma_{n+1}-\gamma_n) (n+1) = \sum_{n=0}^\infty (M-\gamma_{n}-(M-\gamma_{n+1})) (n+1) = \sum_{n=0}^\infty M-\gamma_n < \infty,\]
 Therefore, we have
\[\left\| \frac{G(z)}{z-\beta}\right\|_{Y_\gamma}^2 \leq \|G\|_{\mathcal{H}^2}^2 \sum_{n=0}^\infty (\gamma_{n+1}-\gamma_n) (n+1)<\infty. \]

 With this, we have found a bound for $\left\| \frac{G(z)}{z-\beta}\right\|_{Y_\gamma}^2$ that is independent of $\beta$, so the result is proven. 

Therefore we have shown
 \[\sup_j \left \| \frac{G(z)}{1-\overline{\alpha_j}z} \right \|_{Y_\gamma}^2 < \infty. \]
 \end{proof}

With this lemma, we now have a sufficient condition to show \eqref{eq:boundedsup}. An interesting observation about this lemma is the fact that it imposes a similar condition to the Blaschke condition on the sequence $\gamma_n$. That is, both
\[ \sum_{j=0}^\infty M-\gamma_j < \infty \quad \text{ and } \sum_{j=1}^\infty 1-|\alpha_j|<\infty \]
must hold true for our results.

With all of this, we can now prove Theorem \ref{thm:MR3}.

\subsubsection{Proof of Theorem \ref{thm:MR3}.}

\begin{proof}
We know by hypothesis that $F\in X_\gamma$, and the roots $\alpha_j$, for $j\in J$ satisfy the Blaschke condition, \eqref{eq:BlaschkeCondition}. By Proposition \ref{thm:nreflections}, we know that for any finite $n$, we have the identity
 \[\|F_n\|_{X_\gamma}^2  = \|F\|_{X_\gamma}^2 - \sum_{j=1}^n \left( (1-|\alpha_j|^2) \left \| H_{\alpha_j}\right \|_{Y_\gamma}^2 \right).\]
 
By Lemma \ref{thm:Ygammabound} and by Lemma \ref{thm:boundedGsup}, we know that
\[ \sup_{j\in J} \left\| \frac{F(e^{i\cdot})}{e^{i\cdot}-\alpha_j} \right \|_{Y_\gamma}^2 \leq \sup_{j\in J} \| H_{\alpha_j} \|_{Y_\gamma}^2\leq  \sup_{j\in J} \left\| \frac{G(e^{i\cdot})}{1-\overline{\alpha_j}e^{i\cdot}} \right \|_{Y_\gamma}^2 < \infty.\]
Further, we know that since all $|\alpha_j|<1$,
\[ \sum_{j=1}^\infty 1-|\alpha_j|^2 < 2  \sum_{j=1}^\infty 1-|\alpha_j| < \infty.\]
With this, we have
\[\sum_{j=1}^\infty (1-|\alpha_j|^2)\left \|\frac{F(e^{i\cdot})}{e^{i\cdot}-\alpha_j}  \right \|_{Y_\gamma}^2 < \infty.\]
Since the right hand summation converges, we know
 \[\lim_{n\to \infty}\|F_n\|_{X_\gamma}^2  \le  \|F\|_{X_\gamma}^2 - \sum_{j=1}^\infty \left( (1-|\alpha_j|^2) \left \|  \frac{F(e^{i\cdot})}{e^{i\cdot}-\alpha_j}  \right \|_{Y_\gamma}^2 \right).\]
 Since $\gamma_n$ is bounded, by Lemma \ref{thm:FmtoG}, we may pass the limit through the left hand side and have the inequality
 \[ \|G\|_{X_\gamma}^2  \le  \|F\|_{X_\gamma}^2 - \sum_{j=1}^\infty \left( (1-|\alpha_j|^2) \left \|  \frac{F(e^{i\cdot})}{e^{i\cdot}-\alpha_j}  \right \|_{Y_\gamma}^2 \right),\]
which proves the result.

\end{proof}

\end{document}